\newtheorem{theorem}{Theorem}[section]
\newtheorem{lemma}[theorem]{Lemma}
\newtheorem{proposition}[theorem]{Proposition}
\newtheorem{corollary}[theorem]{Corollary}
\newtheorem{definition}[theorem]{Definition}
\begin{document}
	
	\numberwithin{equation}{section}

	\title{Regularizing Effect for a Nonlocal Maxwell--Schr\"{o}dinger System}
	\author{L. H. de Miranda and A. P. de Castro Santana}
	\date{}
	
	\maketitle{}
	\begin{abstract}
		\hspace{-.5cm} In this paper we prove existence and regularity of weak solutions for the following system
		\begin{align*}
			\begin{cases}
			&-\mbox{div}\Bigg(\bigg(\|\nabla u\|^{p}_{L^{p}}+\|\nabla v\|^{p}_{L^{p}}\bigg)|\nabla u|^{p-2}\nabla u\Bigg)  + g(x,u,v)=f \  \ \ \mbox{in} \ \Omega; \\
			&-\mbox{div}\Bigg(\bigg(\|\nabla u\|^{p}_{L^{p}}+\|\nabla v\|^{p}_{L^{p}}\bigg)|\nabla v|^{p-2}\nabla v\Bigg) = h(x,u,v) \  \ \ \ \mbox{in} \ \Omega; \\
			&u=v=0 \ \mbox{on} \ \partial\Omega.
		\end{cases}
		\end{align*}
		where $\Omega$ is an open bounded subset of $\mathbb{R}^N$,  $N>2$,  $f\in L^m(\Omega)$, where $m>1$ and $g$, $h$ are two Carath\'eodory functions, which may be non monotone. We prove that under appropriate conditions on $g$ and $h$, there is gain of Sobolev and Lebesgue regularity for the solutions of this system. 
	
    {\em Mathematics Subject Classification 2020: } Primary: 35B65, 35D30; Secondary: 35B45,35D99.
    \end{abstract}

	\section{Introduction}%\label{Intro}
	
	In this paper, we investigate the existence and regularity of positive solutions for the following  class of Maxwell-Schr\"odinger systems of Kirchhoff type
	\begin{align*}\tag{P}\label{P}
		\begin{cases}
			&-\mbox{div}\Bigg(\bigg(\|\nabla u\|^{p}_{L^{p}}+\|\nabla v\|^{p}_{L^{p}}\bigg)|\nabla u|^{p-2}\nabla u\Bigg)  + g(x,u,v)=f \  \ \ \mbox{in} \ \Omega; \\
			&-\mbox{div}\Bigg(\bigg(\|\nabla u\|^{p}_{L^{p}}+\|\nabla v\|^{p}_{L^{p}}\bigg)|\nabla v|^{p-2}\nabla v\Bigg) = h(x,u,v) \  \ \ \ \mbox{in} \ \Omega; \\
			&u=v=0 \ \mbox{on} \ \partial\Omega,
		\end{cases}
	\end{align*}	
where, from now on, $\Omega \subset \mathbb{R}^N$, is an open bounded subset, $N>2$, $1<p<N$ and $f\in L^m(\Omega)$ for $m>1$. Moreover, $g$ and $h:\Omega\times\mathbb{R}\times\mathbb{R}\to \mathbb{R}$ %and $h:\Omega\times\mathbb{R}\times\mathbb{R}\to \mathbb{R}$ 
are both  Carath\'{e}odory functions, which may be nonmonotone, responsible for the coupling of our system, see the assumptions below.

In recent years, there have been several results investigating the gain of regularity of solutions of Maxwell--Schr\"odinger systems. As it turned out, even in rough regimes where the datum is not regular, the coupling in these systems allows the existence of solutions which somehow are more regular than expected, the so--called Regularizing Effect. By now, it is known that this is caused by the very interesting nature of the coupling between the equations of systems of Maxwell--Schr\"odinger type. Without the intention of being complete, the interested reader is invited to see \cite{ABO, boc1, boo1, boo2, boo3, dur, ASM} plus the references therein, and many others.

 The starting point for this analysis is due to L. Boccardo in \cite{boc1}, where this phenomenon was first investigated. Later, in a series of works in collaboration with L. Orsina, other aspects of regularity gains were explored, for instance see \cite{boo1,boo11}, or more recently \cite{boo3}, and the references therein. Summarizing, the authors prove existence/ regularizing properties of/for solutions for a class of local Maxwell--Schr\"{o}dinger systems.
 In order to illustrate the ideas, we quote that in \cite{boo1}, they  address the following problem
\begin{align}\label{P1.1}
\begin{cases}
&-\mbox{div}(M(x)\nabla u) + A \varphi |u|^{r-2}u=f, \ \ \  \  \ \mbox{in} \ \ \ \Omega; \\
&-\mbox{div}(M(x)\nabla \varphi) = |u|^r,\  \ \ \ \ \ \ \ \ \ \ \mbox{in} \ \ \ \ \Omega; \\
&u=\varphi=0 \ \ \ \mbox{on} \  \ \ \partial\Omega,
\end{cases}
\end{align}
where, from now on, $M(x)$  is a uniformly elliptic symmetric bounded matrix, i.e.,  $M(x)\xi \cdot\xi \geqslant \alpha|\xi|^{2}  \mbox{ and }  |M(x)|\leqslant \beta \ \ \mbox{a.e in} \ \Omega \ \mbox{and} \ \forall \ \xi \in \mathbb{R}^{N}$, for $0<\alpha\leqslant \beta$, $A$ is a positive constant, and $f\in L^m(\Omega)$. 
In short, among other results, the authors prove that even when $m<(2^*)^\prime$, if $r>2^*$ and $r^\prime \leqslant m$, there exist $u,\varphi \in W^{1,2}_0(\Omega)$, weak solution of \eqref{P1.1}. This is interesting since $f$ lies below the dual threshold $(2^*)^\prime$, $|u|^r$ is a priori low summable, and also because the weight $A\varphi$ is not assumed to be regular or strictly positive. Moreover, when $1<r<2^*$, if $\max\{1,\frac{Nr}{N+2r}\}< m< (2^*)^\prime $ there exists $(u,\varphi) \in W^{1,m^*}_0(\Omega) \times W^{1,q}_0(\Omega) $, solution of \eqref{P1.1} for 
\begin{equation*}q=\begin{cases} 2 \mbox{ if } \frac{2Nr}{N+2+4r}\leqslant m< (2^*)^\prime\\ \frac{Nm}{Nr-2mr-m}
\mbox{ if } \max\{1,\frac{Nr}{N+2r}\} < m < \frac{2Nr}{N+2+4r}.
\end{cases}
\end{equation*}

Later on,  in \cite{ASM}, analogous results were obtained to the following version of \eqref{P1.1} 

\begin{align}\label{P1.2}
\begin{cases}
&-\mbox{div}(M(x)\nabla u) + g(x,u,v)=f, \ \ \  \  \ \mbox{in} \ \ \ \Omega; \\
&-\mbox{div}(M(x)\nabla v) = h(x,u,v),\  \ \ \ \ \ \ \ \ \ \ \mbox{in} \ \ \ \ \Omega; \\
&u=v=0 \ \ \ \mbox{on} \  \ \ \partial\Omega,
\end{cases}
\end{align}
supposing that $f\geqslant 0$ a.e. in $\Omega$, while including two Carathéodory nonlinearities, $g(x,s,t)$ and $h(x,s,t)$, satisfying: \begin{enumerate}
\item[(a)] there exist $c_1$, $c_2>0$ such that
\begin{align}
\label{P1o}\tag{$\mbox{H}^\prime_1$}
c_1|s|^{r-1}|t|^{\theta+1} \leqslant |g(x,s,t)| \leqslant c_2|s|^{r-1}|t|^{\theta+1};
\end{align}
\item[(b)] $g(x,s,t)$ is monotone in $s$, i.e., \begin{align}\label{P2o}\tag{$\mbox{H}_2^\prime$}
(g(x,s_1,t)-g(x,s_2,t))(s_1-s_2)\geqslant0 \ \ \forall \ s_1,s_2 \in\mathbb{R}, t \geqslant 0  \ \ \mbox{a.e.}\  x\ \mbox{in} \ \Omega;
\end{align}
\item[(c)] there exist $d_1$, $d_2>0$ such that
\begin{align}
\label{P3}\tag{$\mbox{H}^\prime_3$}
d_1|s|^{r}|t|^{\theta} \leqslant |h(x,s,t)| \leqslant d_2|s|^{r}|t|^{\theta};
\end{align}
\item[(d)] $h(.,.,.)$ is non-negative
\begin{align}\label{P4}\tag{$\mbox{H}^\prime_4$} 
h(x,s,t)\geqslant0 \ \ \forall \ s,t \in\mathbb{R}, \ \ \mbox{a.e.}\  x\ \mbox{in} \ \Omega.
\end{align}
\end{enumerate}
In the particular case  where $m\geqslant (r+\theta+1)'$,  $0<\theta<\min\big(\frac{4}{N-2},1\big)$, $2^*<r+\theta+1 $ and $(r+\theta+1)^{\prime}\leqslant m<(2^*)^{\prime}$,  recalling that $f\in L^m(\Omega)$, the authors prove the existence of $(u,v) \in W^{1,2}_0(\Omega)\times W^{1,2}_0(\Omega)$, a regularized weak solution of \eqref{P1.2}.

For further developments on the local case, see \cite{ABO}, \cite{boo11}, and \cite{dur}. In \cite{ABO}, in cooperation with D. Arcoya, by assuming a interesting growth condition on $f(x)$,  it is addressed a version of \eqref{P1.1} with the constant $A>0$ replaced by a $A(x)\in L^1(\Omega)$, $A(x)\geqslant 0$, and also with a modification of the diffusion term on the second equation.  In \cite{boo11}, the authors include sublinear and singular structures in \eqref{P1.1}, i.e., the nonlinearities become $A\varphi^\theta u^{r-1}$ and $\frac{u^r}{\varphi^{1-\theta}}$ where $\theta\in(0,1).$  Moreover, for an investigation of a quasilinear version of \eqref{P1.1}, see \cite{dur}.

%presence of singular terms on the right-hand side addressed Later, the authors invesimproved this result, see was later improved in in the case $f\in L^{(2^*)^{\prime}}(\Omega)$, in \cite{boc1}, the authors proved that $u$ and $v$ are is Sobolev regularized (according to the Definition 1.2 in \cite{ASM}) , more specifically, they guarantee $(u,v)$ are energy solution, since $f \notin W^{-1,2}(\Omega).$  

Meanwhile, a nonlocal version of \eqref{P} was introduced by L. Boccardo and L. Orsina in \cite{boo2}, where the authors addressed 
\begin{align}\label{S1.2}
    \begin{cases}
&-\mbox{div}\Big(\big(a(x)+\|\nabla u\|^{2}_{L^{2}}\big)\nabla u\Big) +  \varphi |u|^{r-2}u=f\ \ \  \ \ \mbox{in} \ \ \ \Omega; \\
&-\mbox{div}(M(x)\nabla \varphi) = |u|^r \ \ \ \  \  \ \ \mbox{in} \ \ \ \ \ \Omega; \\
&u=v=0 \ \ \ \ \ \ \mbox{on} \ \ \ \ \ \partial\Omega,
\end{cases}
\end{align}
a Kirchhoff–Schr\"{o}dinger–Maxwell (KMS) system, where $a:\Omega \to \mathbb{R}$ is a measurable function such that $0 <\alpha\leqslant a(x)\leqslant \beta. \ \ x $ a.e in $\Omega$, see \eqref{P1.1} above. The authors conduct a through analysis of existence, nonexistence and regularity of solutions of \eqref{S1.2} for $1<m<\infty$ and $r>1$. In particular, regarding regularizing zones, among other results, the authors proved that, if $ (r+1)^{\prime} \leqslant m <(2^*)^\prime$ and $r\geqslant 2^{*} -1$, or if $(2^*)^\prime\leqslant m$ and $1<r<2^*-1$, then there exists $(u,\varphi) \in W^{1,2}_{0}(\Omega) \times W^{1,2}_0(\Omega)$, solution of \eqref{S1.2}, where $u\in L^q(\Omega)$ for $q =\max \{{m^{**}, \frac{m(2r+1)}{m+1}}\}$. In the case $m<(2^*)^\prime$, clearly $q=\frac{m(2r+1)}{m+1}$ ensuring a double regularizing effect: there exist energy solutions despite that $f\notin W^{-1,2}(\Omega)$ and the $u$ has improved integrability since $q>m^{**}$, see Definition  \ref{regularizing}.

 Further, let us add that as pointed out in \cite{boo2}, outside the energy zone, e.g., $1<m<\min\{(r+1)^\prime, (2^*)^\prime\}$, there might be interesting convergence phenomenon. Indeed, they consider $f\in L^{1}(\Omega)\setminus W^{-1,2}(\Omega)$, $1<r<2$ and $N=6$. In this case, there exists a sequence of approximate solutions $(u_n,\varphi_n)$ where $u_n,\varphi_n \in W^{1,2}_0(\Omega)\cap L^\infty(\Omega)$, and as it turned out, under these assumptions, they prove that $\|\nabla u_n\|^2_{L^2}\to +\infty$ and $u_n \rightharpoonup  0$ weakly in $W^{1,q}_0(\Omega)$, for $1<q<\frac{18}{11}$. Yet, by setting $v_n= \|\nabla u_n\|^2_{L^2}u_n$, they show that $v_n \rightharpoonup w$, the entropy solution of $-\Delta w = f$ and moreover, since $|u_n|^r \to 0$ strongly in $L^{1}(\Omega)$, then $\varphi_n \rightharpoonup 0$ weakly in $W^{1,\rho}_0(\Omega)$, $1<\rho<\frac{6}{5}$. As a consequence, the second equation degenerates and converges to $0=0$.%hese solutions exhibit the following peculiar behavioraproximada $\{u_k\}$ é limitada em $W^{1,2}_0(\Omega)$, ocorre que $u_k \rightharpoonup 0$ em $W^{1,q}_{0}(\Omega)$ para cada $1<q<\frac{18}{11}$ e $\{D_k u_k\}\rightharpoonup w$ em $W^{1,\rho}_0(\Omega)$ para cada $1<\rho<\frac{6}{5}$, onde $D_k = \|\nabla u_k\|^{2}_{L^{2}}$ e $w$ é solução de entrópia da equação $-\Delta w = f$. Consequentemente a segunda equação degenera $0=0$. Esse fato, anula a possibilidade de obter um efeito regularizante considerando $f\in L^{m}(\Omega)$ com $m
 
 In the present paper, we are interested in the analysis of regularity properties of solutions to a quasilinear version of the nonlocal Maxwell--Schrodinger system given by \eqref{P}. As a result, we have to deal with certain obstacles arising from the modifications that we introduced.%, e.g., a nonlocal version of the the lack of duality at the same time combining ideas from \cite{boo2,dur} and \cite{ASM}. As a result, this . Durastanti nonlocal.  
   If on one hand, we combine ideas from \cite{boo2,dur} and \cite{ASM}, at the same time, we have to circumvent that our operators are not self--adjunct, causing some challenges when translating duality arguments from Laplacian to $p$-Laplacian equations, and yet work with a class of zeroth-order of nonlinearities. In particular, this makes the approach based on approximation and passage to the limit slightly more complex. %Finally, with very slight adaptations, as a corolarry of our proofs, part of our results are new even for the local version of our problem, since we solve one conjecture proposed by durastanti}

Our main purpose is to prove the existence of regularized distributional solutions to \eqref{P}. With this in mind and in order to avoid the aforementioned degeneration of the limit solutions appearing in KMS systems, c.f. \cite{boo2} and as described above, we restrict our analysis to energy zones for the datum $f\in L^m(\Omega)$, meaning that $m> (r+\theta+1)^\prime$, see our assumptions below.

\subsection*{Basic Assumptions}
Throughout the text we  assume the hypotheses below.

We consider $r>1$, $0<\theta < p-1$, $1<p<N$, $1<m<\frac{N}{P}$, $g:\Omega\times\mathbb{R}\times\mathbb{R}\to \mathbb{R}$ and $h:\Omega\times\mathbb{R}\times\mathbb{R}\to \mathbb{R}$, both Carath\'{e}odory, satisfying	

\begin{enumerate}
		\item[(i)]\ there exist constants $c_1>0$, $c_2>0$,  such that
		\begin{align}
			\label{H1}\tag{$\mbox{H}_1$}
		 c_1|s|^{r}|t|^{\theta+1}&\leqslant g(x,s,t)s 	 
		\\
		\label{H2}\tag{$\mbox{H}_2$}
			|g(x,s,t)| &\leqslant c_2|s|^{r-1}|t|^{\theta+1} 
   		\end{align}
for all $ s, t \in\mathbb{R}$ a.e. $x$ in  $\Omega$;
  
  \item[(ii)] there exist constants $d_1>0$, $d_2>0$  such that
		\begin{align}
			\label{H3}\tag{$\mbox{H}_3$}
			 d_1|s|^{r}|t|^{\theta+1} &\leqslant h(x,s,t)t
		\\
  \label{H4}\tag{$\mbox{H}_4$}
  |h(x,s,t)| &\leqslant d_2|s|^{r}|t|^{\theta}
	\end{align}		
    for all  $s, t \in\mathbb{R}$ a.e.  $x$ in  $\Omega$.
		
	\end{enumerate}
 	Let us briefly remark that the prototypes for $g$ and $h$ are given by $g(s,t)=s|s|^{r-2}|t|^{\theta+1}$ and $h(s,t)=t|s|^{r}|t|^{\theta-1}$. Moreover, we stress that \eqref{H1}-\eqref{H4} are slightly weaker than the hypotheses assumed in \cite{ASM}, see \eqref{P1o}-\eqref{P4}. Grossly speaking,  we now allow nonmonotone nonlinearities which may change sign. For example, we can consider \[g(x,s,t)=V_1(x)s|s|^{r-2}|t|^{\theta+1}(\mbox{cos}(\pi s)+\pi) \mbox{ and } h(x,s,t)=V_2(x)t|s|^{r}|t|^{\theta-1}(\mbox{sin}(\pi t)+\pi),\] where $V_i(x)\geqslant e_i>0$ a.e. in $\Omega$, $i=1,2$, which satisfy \eqref{H1}-\eqref{H4} but not \eqref{P1o}-\eqref{P4}.  
\subsection*{Main Results}
Our main results concern existence of solutions, and also the investigation of regularizing properties of system \eqref{P}. 
In order to pursue our main objectives, for the sake of clarity, we have decided to include our definition of solutions for \eqref{P}.

\begin{definition}%\label{D1}
Let $1<p<N$. We say that $(u,v)$ in $W^{1,p}_{0}(\Omega)\times W^{1,p}_{0}(\Omega)$, is a solution for problem \eqref{P} if and only if 
\begin{align}\displaystyle\tag{$P_{F}$}\label{P_F}
\begin{cases}
&\displaystyle \mathcal{A}(u,v)\int_{\Omega}|\nabla u|^{p-2}\nabla u \cdot \nabla \varphi  + \int_{\Omega} g(x,u,v)\varphi = \int_{\Omega} f\varphi \ \ \ \ \forall\ \varphi \in C^\infty_c(\Omega)\\
& \displaystyle \mathcal{A}(u,v)\int_{\Omega}|\nabla v|^{p-2}\nabla v\cdot\nabla\psi = \int_{\Omega} h(x,u,v)\psi \ \ \ \ \ \forall\ \psi \in C^\infty_c(\Omega),
\end{cases}
\end{align}
$g(.,u,v)$ and $h(.,u,v) \in L^1_{loc}(\Omega)$, where $\mathcal{A}(u,v)= \|\nabla u\|^{p}_{L^{p}}+\|\nabla v\|^{p}_{L^{p}}$.
\end{definition}

Remark that the latter definition of solutions is intermediate. Indeed, on one hand they are not regular enough to be considered as weak solutions, for instance, we cannot take test functions in $W^{1,p}_0(\Omega)$. On the other hand, they are better than standard distributional or even ``very weak" solutions since $(u,v)\in W^{1,p}_0(\Omega)\times W^{1,p}_0(\Omega)$.

%{\color{red} De acordo com as estimarivas a priori obtidas no Lema \ref{lem402}, garantimos inicialmente a existência de soluções fraca  $(u,v)$  do problema \eqref{P}, onde $u\in W^{1,p}_{0}(\Omega) \cap L^{r+\theta+1}(\Omega)$ and  $v\in W^{1,p}_{0}(\Omega)$, assumindo que a fonte $f$ pertence à $L^m(\Omega)$ com $ m >\min\{(r+\theta+1)^{\prime}, (p^*)^{\prime}\}$.
%}

Throughout the text, we are going to discuss whether or not our pair $(u,v)$ gains regularity contrasted to the standard regularity theory for the associated decoupled equations.  For this, to simplify the statements of our results, following \cite{ASM}, we introduce the notion below of {\bf Sobolev and Lebesgue Regularized} solutions.

For this, from now on, given $1<p<N$ and $1\leqslant \tau < \frac{N}{p}$ we set 
\begin{equation}
    \label{1755}
\tau_p^*=\tau^* (p-1)= \dfrac{N\tau(p-1)}{N-\tau} \mbox{ and } \tau^{**}_p=(\tau_p^*)^*= \dfrac{N\tau(p-1)}{N-\tau p}. \end{equation}

Based on the regularity results obtained for $p$-Laplacian-like equations in \cite{bbggpv, bog2}, we consider the following concept of regularized solutions:
\begin{definition}\label{regularizing}
Consider $w\in W^{1,1}_0(\Omega)$ a distributional solution of
\[-\Delta_p w = F\in L^\tau(\Omega).\]
\begin{itemize}
    \item[(a)] We say that $w$ is Lebesgue regularized if $w\in L^q(\Omega)$ for $q>\max\{\tau^{**}_p,1\};$
    \item[(b)] We say that $w$ is Sobolev regularized if $\nabla w \in W^{1,q}_0(\Omega)$ where $q>\max\{\tau_p^*,1\}.$
\end{itemize}
\end{definition}
This definition formalizes the idea that, somehow sometimes, solutions could be a little better than what is guaranteed by the classic regularity machinery. 
Remark that there are no regularized zones where solutions are as regular as predicted by standard regularity theory.

We are now in position to introduce our first result.

 \begin{theorem} \label{theorem1}
		Let $f\in L^{m}(\Omega),$ where $f\geqslant0$ a.e in $\Omega,$ $ \min\{(r+\theta+1)^{\prime}, (p^*)^{\prime}\}<m<\frac{N}{p}$ and $0<\theta< \min\{p-1, \frac{p^2}{N-p}\}.$ Then there exists a  solution $(u,v)$ for \eqref{P}, with $u\in W^{1,p}_{0}(\Omega) \cap L^{r+\theta+1}(\Omega)$, $u\geqslant0$ a.e in $\Omega$ and $v\in W^{1,p}_{0}(\Omega)$, $v\geqslant0$ a.e. in $\Omega$.    
	\end{theorem}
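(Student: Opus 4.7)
The plan is to follow the standard four-step programme for quasilinear systems with nonlocal Kirchhoff coupling and low-integrability data: regularize, solve the regularized problems via a fixed-point argument, derive uniform a priori bounds, and pass to the limit using almost-everywhere convergence of the gradients. For the regularization I would set $f_n=T_n(f)$, $g_n(x,s,t)=T_n(g(x,s,t))$, $h_n(x,s,t)=T_n(h(x,s,t))$, where $T_n$ is the usual truncation at level $n$; the resulting $g_n,h_n$ are bounded Carath\'eodory functions still enjoying the upper bounds \eqref{H2} and \eqref{H4}. For each $n$, existence of $(u_n,v_n)\in W^{1,p}_0(\Omega)\times W^{1,p}_0(\Omega)$ would follow from Schauder's theorem applied to the map sending $(\bar u,\bar v)$ to the unique $(u,v)$ solving the system in which $\mathcal{A}(\bar u,\bar v)$ and the reaction terms are frozen at $(\bar u,\bar v)$: each decoupled $p$-Laplace equation is uniquely solvable by Browder--Minty, and the compactness of $W^{1,p}_0\hookrightarrow L^p$ gives continuity and compactness of the map on a sufficiently large ball. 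The nonnegativity $u_n,v_n\geqslant 0$ follows by testing with $u_n^-$ and $v_n^-$, invoking $f\geqslant 0$ and the sign information implicit in \eqref{H1} and \eqref{H3}.

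The core step is the uniform a priori estimate. Testing the first approximate equation with $u_n$ and the second with $v_n$, then applying \eqref{H1}, \eqref{H3} and \eqref{H4}, yields
\begin{equation*}
\mathcal{A}(u_n,v_n)\|\nabla u_n\|_{L^p}^p+c_1\int_\Omega u_n^r v_n^{\theta+1}\leqslant\int_\Omega f u_n, \qquad \mathcal{A}(u_n,v_n)\|\nabla v_n\|_{L^p}^p\leqslant d_2\int_\Omega u_n^r v_n^{\theta+1}\leqslant \frac{d_2}{c_1}\int_\Omega f u_n,
\end{equation*}
from which, upon summation, $\mathcal{A}(u_n,v_n)^2\leqslant C\|f\|_{L^m}\|u_n\|_{L^{m'}}$. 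When $m\geqslant (p^*)'$, the Sobolev embedding gives $\|u_n\|_{L^{m'}}\leqslant C\mathcal{A}(u_n,v_n)^{1/p}$ and the estimate closes. When $(r+\theta+1)'\leqslant m<(p^*)'$, I would instead bootstrap: testing the second equation with a power $v_n^k$ yields improved integrability of $v_n$ in terms of $u_n$ via the weighted Sobolev inequality, and then combining with a Young-type splitting of the bound $c_1\int u_n^r v_n^{\theta+1}\leqslant \int f u_n$ allows one to absorb an $L^{r+\theta+1}$-norm of $u_n$ on the left. The hypothesis $\theta<p^2/(N-p)=p^*-p$ is exactly what keeps the Sobolev exponents admissible in this iteration. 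Altogether this produces uniform bounds for $(u_n,v_n)$ in $W^{1,p}_0(\Omega)\times W^{1,p}_0(\Omega)$ and for $u_n$ in $L^{r+\theta+1}(\Omega)$.

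After extracting a subsequence with $u_n\rightharpoonup u$ and $v_n\rightharpoonup v$ weakly in $W^{1,p}_0(\Omega)$ (strongly in $L^p$ and a.e.), it remains to pass to the limit. The main obstacle is that $\mathcal{A}$ depends on the \emph{full} $L^p$-norms of the gradients, so Minty's monotonicity trick does not apply directly to $\mathcal{A}(u_n,v_n)|\nabla u_n|^{p-2}\nabla u_n$: one first needs the pointwise convergence $\nabla u_n\to\nabla u$ and $\nabla v_n\to\nabla v$ a.e.\ in $\Omega$. This is the technically delicate part, and I would carry it out via the Boccardo--Murat technique, testing the difference of the approximate and candidate-limit equations with suitable truncations of $u_n-u$ and $v_n-v$ and exploiting the strict monotonicity of $\xi\mapsto|\xi|^{p-2}\xi$. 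Once a.e.\ convergence of the gradients is in hand, $\mathcal{A}(u_n,v_n)\to\mathcal{A}(u,v)$ follows from Brezis--Lieb together with the uniform $L^p$-bounds, the reaction terms $g_n$ and $h_n$ converge in $L^1(\Omega)$ by Vitali thanks to the uniform $L^{r+\theta+1}$-bound, and $(u,v)$ solves $(P_F)$ in the limit. Nonnegativity and the $L^{r+\theta+1}$ estimate are preserved by Fatou's lemma.
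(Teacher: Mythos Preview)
Your outline is broadly in the right spirit, but there is a genuine gap at the heart of the argument: the uniform $L^{r+\theta+1}$ bound on $u_n$ in the regime $(r+\theta+1)'<m<(p^*)'$. Your proposal to ``bootstrap by testing the second equation with a power $v_n^k$'' does not close: such a test controls weighted norms of $v_n$ in terms of $\int u_n^r v_n^{\theta+k}$, but never produces an $L^{r+\theta+1}$ norm of $u_n$ on the left. The paper's key idea is to test the \emph{second} equation with the mixed function $\psi=u_k^{\theta+1}(v_k+\varepsilon)^{-\theta}$. Using \eqref{H3} on the left and a Young splitting of the cross term $\int |\nabla v_k|^{p-1}|\nabla u_k|\,u_k^\theta(v_k+\varepsilon)^{-\theta}$ (this is precisely where $\theta<p-1$ enters, since one needs $\tfrac{p\theta}{p-1}<\theta+1$; the condition $\theta<p^2/(N-p)$ is used elsewhere, for the $L^\infty$ bounds on the approximants), one obtains
\[
\int_\Omega u_k^{r+\theta+1}\,\frac{v_k^\theta}{(v_k+\varepsilon)^\theta}\;\leqslant\; C\,\mathcal{A}_k(u_k,v_k)\Big(\|\nabla u_k\|_{L^p}^p+\|\nabla v_k\|_{L^p}^p\Big)\;\leqslant\; C\|f\|_{L^m}\|u_k\|_{L^{m'}},
\]
and since $m'\leqslant r+\theta+1$ the estimate closes after letting $\varepsilon\to 0$ via Fatou.

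This last limit requires $v_k>0$ a.e., and here your regularization is also insufficient: the paper adds $\tfrac{1}{k}$ to the right-hand side of the second equation (so the strong maximum principle gives $v_k>0$) and also to the Kirchhoff coefficient, setting $\mathcal{A}_k=\tfrac{1}{k}+\|\nabla u_k\|_{L^p}^p+\|\nabla v_k\|_{L^p}^p$. Without the latter your Schauder map degenerates at $(\bar u,\bar v)=(0,0)$; without the former the Fatou step above only yields $\int_{\{v_k>0\}}u_k^{r+\theta+1}$. The remaining steps---$L^1$ convergence of $g,h$ via Vitali (the paper uses the test function $\tfrac{1}{\lambda}T_\lambda(G_n(u_k))u_k$ to establish equi-integrability on $\{u_k>n\}$) and strong $W^{1,p}_0$ convergence of $(u_k,v_k)$ via testing with $u_k-T_n(u)$, $v_k-T_n(v)$---are close to what you sketch, though the paper obtains strong gradient convergence directly rather than a.e.\ convergence plus Brezis--Lieb.
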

Our approach to prove Theorem \ref{theorem1} is based on the analysis of approximate problems allowing us to gather delicate integral estimates. This is achieved by means of the careful choice of tailored test functions. Of course, in the end, we pass to the limit.

This theorem connects results from \cite{boo2}, \cite{dur} and \cite{ASM}, since it reaches a nonlocal, $p$-Laplacian-like Maxwell-Schr\"{o}dinger system, for a broader class of nonlinearities. This combination, brings new difficulties to a problem which is already complex. If on one hand, we consider $g$ and $h$ satisfying weaker conditions than previous results, on the other hand, the coupling between these nonlinearities and the nonlocal terms makes the problem even more challenging, especially the passage to the limit of the approximate problems, which is very tricky.   Regarding the restriction on $m>m_0=(r+\theta+1)^\prime$ mentioned before, this is considered to avoid known degeneration issues for limit solutions of Kirchhoff-Maxwell-Schr\"{o}dinger systems, as explained above and described in \cite{boo2}.

In the particular case of the solutions provided by Theorem \ref{theorem1}, the regularizing zones are established in the next result.
\begin{corollary}\label{ER}
Consider $(u,v)$ a solution for \eqref{P}, given by Theorem \eqref{theorem1}. 
\begin{itemize}
    \item[(I)] If $r+\theta>p^{*} -1$ and $(r+\theta+1)^{\prime} <m< (p^{*})^{\prime}$, then $u$ is Sobolev regularized.
    \item[(II)] If $r+\theta>p^{*} - 1$ and $(p^{*})^{\prime}\leqslant m < \frac{N(r+\theta+1)}{N(p-1)+p(r+\theta+1)} $, then $u$ is Lebesgue regularized. 
    \item[(III)] If $r+\theta> p^{*} -1$, then $v$ is Sobolev regularized. 
\end{itemize}
    
\end{corollary}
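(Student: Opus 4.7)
The key observation is that any solution $(u,v)$ produced by Theorem~\ref{theorem1} makes the coefficient $\mathcal{A}(u,v)=\|\nabla u\|_{L^{p}}^{p}+\|\nabla v\|_{L^{p}}^{p}$ a strictly positive \emph{constant}; dividing through in \eqref{P_F}, I would view $u$ and $v$ as solutions of the decoupled $p$-Laplace equations
\[
-\Delta_{p}u=\frac{f-g(x,u,v)}{\mathcal{A}(u,v)},\qquad -\Delta_{p}v=\frac{h(x,u,v)}{\mathcal{A}(u,v)},
\]
so that Definition~\ref{regularizing} is directly applicable. The plan is then to identify, in each regime, the integrability $\tau$ of the right-hand side dictated by standard theory, compute the corresponding thresholds $\tau_{p}^{*}$ and $\tau_{p}^{**}$, and compare them against the a priori regularity $u,v\in W^{1,p}_{0}(\Omega)$, $u\in L^{r+\theta+1}(\Omega)$ already supplied by Theorem~\ref{theorem1}.

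For (I), the reference datum is $f\in L^{m}$. A direct rearrangement of $m_{p}^{*}=Nm(p-1)/(N-m)$ gives the equivalence $m<(p^{*})'\Longleftrightarrow m_{p}^{*}<p$, so $u\in W^{1,p}_{0}(\Omega)$ strictly beats the decoupled expectation $W^{1,m_{p}^{*}}_{0}(\Omega)$ and is Sobolev regularized. For (II), with $m\geqslant(p^{*})'$, manipulating $m_{p}^{**}=Nm(p-1)/(N-mp)$ produces
\[
m_{p}^{**}<r+\theta+1\;\Longleftrightarrow\;m<\frac{N(r+\theta+1)}{N(p-1)+p(r+\theta+1)},
\]
which is exactly the stated upper bound, so $u\in L^{r+\theta+1}(\Omega)$ gives the desired Lebesgue regularization.

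Item (III) is where the one genuine calculation lies. Starting from \eqref{H4}, $|h(x,u,v)|\leqslant d_{2}|u|^{r}|v|^{\theta}$, I would apply H\"older's inequality choosing exponents that exactly saturate the a priori bounds $u\in L^{r+\theta+1}(\Omega)$ and $v\in L^{p^{*}}(\Omega)$ (the latter coming from Sobolev embedding). Solving $r\tau/(r+\theta+1)+\theta\tau/p^{*}=1$ pins down the saturating integrability
\[
\tau=\frac{p^{*}(r+\theta+1)}{rp^{*}+\theta(r+\theta+1)},
\]
so that $h\in L^{\tau}(\Omega)$. An elementary algebraic check gives $\tau_{p}^{*}<p\Longleftrightarrow\tau<(p^{*})'\Longleftrightarrow p^{*}-1<r+\theta$, which is precisely the hypothesis; therefore $v\in W^{1,p}_{0}(\Omega)$ exceeds the predicted $W^{1,\tau_{p}^{*}}_{0}(\Omega)$ and is Sobolev regularized. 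The only mildly delicate step in the whole argument is this H\"older saturation in (III); parts (I) and (II) amount to algebraic rearrangements of the very relations that define the Boccardo--Gallou\"et--V\'azquez critical thresholds.
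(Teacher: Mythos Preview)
Your argument is correct and follows essentially the same approach as the paper: the same algebraic equivalences $m<(p^{*})'\Longleftrightarrow m_{p}^{*}<p$ and $m_{p}^{**}<r+\theta+1\Longleftrightarrow m<\frac{N(r+\theta+1)}{N(p-1)+p(r+\theta+1)}$ for (I)--(II), and the identical H\"older saturation yielding $h\in L^{\tau}$ with $\tau=\frac{p^{*}(r+\theta+1)}{rp^{*}+\theta(r+\theta+1)}$ together with $\tau<(p^{*})'\Longleftrightarrow r+\theta>p^{*}-1$ for (III). Your explicit remark that $\mathcal{A}(u,v)>0$ allows one to divide through and view $u,v$ as solutions of decoupled $p$-Laplace equations is a useful clarification that the paper leaves implicit.
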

Observe that we do not consider $2<r+\theta+1\leqslant p^*$ in the regularizing zones given by Corollary \ref{ER}. This is due to the fact that  we take $m>\min\{(r+\theta+1)^{\prime},(p^*)^\prime\}$. Then,  if $r+\theta+1\leqslant p^*$ which is equivalent to  $m\geqslant(p^*)^\prime$ we get $m^*_p\geqslant p$ and $m^{**}_p\geqslant p^*$, so that out solutions are outside the regularizing zones.

Another consequence of  Theorem \ref{theorem1}  is that, under its hypotheses, the solutions $(u,v)$ are not trivial or semitrivial. In general terms, the principle is that nonsmooth solutions cannot be trivial.

\begin{corollary}\label{C1.2}
    Consider $(u,v)$ solution given by Theorem \ref{theorem1}. Then $u$ is nontrivial, i.e., $u\neq 0$ at least in a set of positive Lebesgue measure. Moreover, if $m<(p^*)^\prime$ then, $v$ is also nontrivial, ie. $v\neq 0$ at least in a set of positive Lebesgue measure. 
\end{corollary}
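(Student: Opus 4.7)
The plan is a proof by contradiction in each part, exploiting the degeneracy at zero of the coupling nonlinearities $g$ and $h$ encoded in \eqref{H2} and \eqref{H4}.

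For the non-triviality of $u$, suppose $u\equiv 0$ a.e. in $\Omega$. Then \eqref{H2} yields $g(x,0,v)=0$ a.e., so the first equation of \eqref{P_F} collapses to $\int_\Omega f\varphi=0$ for every $\varphi\in C^\infty_c(\Omega)$. This forces $f=0$ a.e., contradicting the standing (implicit) assumption that $f\not\equiv 0$ in $\Omega$.

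For the non-triviality of $v$ when $m<(p^*)'$, assume instead that $v\equiv 0$ a.e. Combining \eqref{H2} and \eqref{H4} gives $g(x,u,0)=h(x,u,0)=0$, so the second equation of \eqref{P_F} is trivially satisfied and the first reduces to $-\|\nabla u\|_{L^p}^p\,\Delta_p u = f$ in $\mathcal{D}'(\Omega)$. The case $\|\nabla u\|_{L^p}=0$ collapses to the first part, so we may assume $c:=\|\nabla u\|_{L^p}^p>0$, whence $-\Delta_p u=f/c$ distributionally with $u\in W^{1,p}_0(\Omega)$. The crux is then a regularity observation: since $u\in W^{1,p}_0(\Omega)$, the vector field $|\nabla u|^{p-2}\nabla u$ lies in $L^{p'}(\Omega)^N$, so its distributional divergence defines a continuous linear functional on $W^{1,p}_0(\Omega)$; hence $f\in W^{-1,p'}(\Omega)$. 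Because $m<(p^*)'$ and the embedding $L^{(p^*)'}\hookrightarrow W^{-1,p'}$ is strict, this is incompatible with $f\in L^m(\Omega)\setminus W^{-1,p'}(\Omega)$, giving the desired contradiction.

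The step I expect to be the main obstacle is handling the borderline data $f\in L^m(\Omega)\cap W^{-1,p'}(\Omega)$, for which the distributional argument above does not close by itself. To cover this regime I would revisit the approximation scheme used in the proof of Theorem \ref{theorem1}: by testing the two approximate equations with $u_n$ and $v_n$ and combining the lower bounds \eqref{H1} and \eqref{H3}, one derives coupled energy inequalities that prevent $\|\nabla v_n\|_{L^p}$ from collapsing to zero, so that the limit $v$ retains a positive gradient norm and cannot vanish identically.
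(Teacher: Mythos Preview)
Your core argument is essentially the same as the paper's: for $u$, if $u\equiv 0$ then \eqref{H2} kills $g$ and the first equation forces $f\equiv 0$; for $v$, if $v\equiv 0$ then \eqref{H2} kills $g$ and the first equation exhibits $f$ as $-\mathcal{A}(u,v)\Delta_p u$, hence $f\in W^{-1,p'}(\Omega)$, which is then declared incompatible with $m<(p^*)'$. The paper phrases the first contradiction as ``$f\in L^\infty$, contradicting $m<N/p$'' and the second as ``$f=A(u)\in W^{-1,p'}$, contradicting $m\leqslant(p^*)'$'', but the logic is identical to yours.

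You are actually more careful than the paper in one respect: you correctly flag that the contradiction in the second part only closes when $f\in L^m(\Omega)\setminus W^{-1,p'}(\Omega)$, and that the bare hypothesis $m<(p^*)'$ does not by itself exclude $f\in W^{-1,p'}(\Omega)$. The paper simply asserts the contradiction without commenting on this borderline, so in that sense your identification of the implicit assumption is an improvement.

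However, your proposed remedy for the borderline regime does not work as stated. Testing the approximate equations with $u_k$ and $v_k$ and invoking \eqref{H1}, \eqref{H3} yields only
\[
\mathcal{A}_k(u_k,v_k)\int_\Omega|\nabla v_k|^p \leqslant d_2\int_\Omega u_k^r v_k^{\theta+1},
\]
which is an \emph{upper} bound for $\|\nabla v_k\|_{L^p}$ in terms of the coupling integral, not a lower bound; if $v_k\to 0$ this integral goes to zero and nothing prevents $\|\nabla v_k\|_{L^p}\to 0$. Indeed, when $f\in W^{-1,p'}(\Omega)$ the pair $(u,0)$ with $u$ solving $-\|\nabla u\|_{L^p}^p\Delta_p u=f$ is a legitimate solution of \eqref{P_F}, so without a uniqueness argument there is no a priori reason the approximation scheme must avoid it. The paper does not address this case either; the result should be read under the tacit hypothesis that $f$ is genuinely rough, i.e.\ $f\notin W^{-1,p'}(\Omega)$ (and $f\not\equiv 0$ for the first part).
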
 
 The interesting fact is that this nontrivial behavior of solutions comes directly from the non smooth character of the data where we obtain existence of solutions.
 In particular, if $r+\theta+1\leqslant m <(p^*)^\prime$, our solutions are purely vector, see \cite{liliane}.

\section{Preliminaries} %Since $\theta<\frac{p^2}{N-p}$, there exists $r>\frac{N}{p}$

The first result addresses a preliminary version for an approximate problem of \eqref{P} based on adequate truncations of $g$, $h$. Indeed, from now on, given $\eta>0$, by recalling that  $T_\eta(s)=\min\{\eta,\max\{-\eta,s\}\}$,  we denote
\begin{align*}
		%\label{truncationshg}
		g_\eta(x,t,s)=T_\eta(g(x,t,s)) \ \ \ \mbox{and} \ \ \
			h_\eta(x,t,s)=T_\eta(h(x,t,s)).	
	\end{align*}
	It is clear that $g_\eta$ and $h_\eta$  are bounded and also $g_\eta(x,s,t)s\geqslant 0$ a.e. in $\Omega$. We are in a position to provide the following proposition, based on the Browder--Minty Theorem, see \cite{lionsbook}, Thm. 2.7, p. 180.

	\begin{proposition}\label{prop1}
 Given $F\in L^t(\Omega)$, $\eta,\tau>0$, if $t\geqslant (p^*)^{\prime}$,  there exists a couple $(u,v)\in W^{1,p}_0(\Omega) \times W^{1,p}_0(\Omega)$ weak solution of
	\begin{align}\label{FistE}
	    & \mathcal{A}_\tau(u,v)\int_{\Omega} |\nabla u|^{p-2}\nabla u \cdot \nabla \varphi + \int_{\Omega} g_{\eta}(x,u,v)\varphi = \int_{\Omega}F \varphi   \\
     & \label{SecondE} \mathcal{A}_\tau(u,v)\int_{\Omega} |\nabla v|^{p-2}\nabla v \cdot \nabla \psi = \int_{\Omega} h_{\eta}(x,u,v)\psi+\int_\Omega\dfrac{\psi}{\tau}\ \ \ \forall \ \varphi, \psi \in W^{1,p}_0(\Omega),
	\end{align}
 where $\mathcal{A}_\tau(u,v)\ = \dfrac{1}{\tau}+ \|\nabla u\|^{p}_{L^{p}}+ \|\nabla v\|^{p}_{L^{p}}$. %Furthermore, there exists $C_\tau > 0$ depending on $\tau$ such that
  
	%\begin{align}\label{Es3.3}
		%		\|u\|_{W^{1,p}_0} + \|v\|_{W^{1,p}_0}\leqslant C\|F \|_{L^{\overline{m}}}.			
    %\end{align}
   Moreover, if $t>\dfrac{N}{p}$ and $0<\theta<\dfrac{p^2}{N-p}$,  then $u, v \in L^\infty(\Omega)$ and 
			\begin{align}
			    \label{1749}
   \|u\|_{L^\infty} &\leqslant \dfrac{C\|F\|^{\frac{1}{p-1}}_{L^t}}{(\|\nabla u\|^p_{L^p}+\|\nabla v\|^p_{L^p}+\min{\{1,\tau^{-1}\}})^{\frac{1}{p-1}}}\leqslant C\|F\|_{L^t}^{\frac{1}{p-1}}\max\{1,\tau^{\frac{1}{p-1}}\},\\
\nonumber   \|v\|_{L^\infty} &\leqslant \dfrac{C\tau^{\frac{1}{p-1}}\|F\|_{L^t}^{\frac{r}{(p-1)^2}}\|\nabla v\|_{L^p}^{\frac{\theta}{p-1}}}{(\|\nabla u\|^p_{L^p}+\|\nabla v\|^p_{L^p}+\min{\{1,\tau^{-1}\}})^{\frac{r}{(p-1)^2}}}+1\\
\label{1932}
&\leqslant C\Big(\|F\|_{L^t}^{\frac{r(2p-1)+\theta(p-1)}{(p-1)^2(2p-1)}}\max\{1,\tau^{\frac{r+p-1}{(p-1)^2}}\}+1\Big),
            \end{align}
	where $C=C(c_2,d_2, p,r,t, N,\theta,\Omega)>0$ and is uniform with respect to $\eta>0$.
    \end{proposition}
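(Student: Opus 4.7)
The plan is to obtain the weak solution by applying the Browder--Minty theorem for pseudo--monotone operators, and then to extract the $L^\infty$ bounds by Stampacchia's iteration, in each case carefully tracking the dependence on $\mathcal{A}_\tau(u,v)$.

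For existence, I would set $X=W^{1,p}_0(\Omega)\times W^{1,p}_0(\Omega)$ and define $\mathcal{T}:X\to X^*$ by
\[
\langle \mathcal{T}(u,v),(\varphi,\psi)\rangle = \mathcal{A}_\tau(u,v)\!\int_\Omega\!\!\bigl(|\nabla u|^{p-2}\nabla u\cdot\nabla\varphi + |\nabla v|^{p-2}\nabla v\cdot\nabla\psi\bigr) + \int_\Omega g_\eta(x,u,v)\varphi - \int_\Omega h_\eta(x,u,v)\psi.
\]
The crucial observation is that the Kirchhoff principal part coincides with the Fréchet differential of the functional
\[
J(u,v)=\frac{1}{2p}\bigl(\|\nabla u\|_{L^p}^p+\|\nabla v\|_{L^p}^p\bigr)^2+\frac{1}{p\tau}\bigl(\|\nabla u\|_{L^p}^p+\|\nabla v\|_{L^p}^p\bigr),
\]
which is convex (composition of the convex nondecreasing $s\mapsto s^2/(2p)+s/(p\tau)$ with the convex map $(u,v)\mapsto\|\nabla u\|_{L^p}^p+\|\nabla v\|_{L^p}^p$); hence this part is monotone. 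Since $g_\eta,h_\eta$ are bounded by $\eta$, the compact embedding $W^{1,p}_0(\Omega)\hookrightarrow L^p(\Omega)$ together with dominated convergence makes the lower--order terms completely continuous, so $\mathcal{T}$ is pseudo--monotone and bounded. Coercivity follows from
$\langle \mathcal{T}(u,v),(u,v)\rangle\geq \mathcal{A}_\tau(u,v)(\|\nabla u\|_{L^p}^p+\|\nabla v\|_{L^p}^p) - C\eta(\|u\|_{L^1}+\|v\|_{L^1})$,
which grows like $\|(u,v)\|_X^{2p}$ while the right--hand side $\Phi(\varphi,\psi)=\int F\varphi+\tau^{-1}\int\psi$ (lying in $X^*$ since $t\geq(p^*)'$) grows linearly. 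Browder--Minty then supplies the weak solution.

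For the $L^\infty$ estimate on $u$, I would run Stampacchia's scheme by testing the first equation with $\varphi=G_k(u^+)-G_k(u^-)$, where $G_k(s)=(s-k)^+$ and $k>0$. The sign condition \eqref{H1} gives $\int g_\eta(x,u,v)\varphi\geq 0$, so that
$\mathcal{A}_\tau(u,v)\int_\Omega|\nabla G_k(|u|)|^p\leq \int_\Omega|F|\,G_k(|u|).$
Applying Sobolev's inequality on the left and Hölder with exponents $(t,t')$ on the right, followed by the interpolation $\|G_k(|u|)\|_{L^{t'}}\leq\|G_k(|u|)\|_{L^{p^*}}|A_k|^{1/t'-1/p^*}$ (which is legitimate because $t>N/p$ implies $t'<p^*$), produces
$|A_h|\leq C(h-k)^{-p^*}|A_k|^{p^*\alpha/(p-1)}$ with $\alpha/(p-1)>1/p^*$, and the classical Stampacchia lemma closes the iteration to yield the first bound in \eqref{1749}; the final form in \eqref{1749} is then just the trivial consequence of $\mathcal{A}_\tau(u,v)\geq\min\{1,\tau^{-1}\}$.

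The bound on $v$ is the delicate step. Testing the second equation with $\psi=G_k(v^+)-G_k(v^-)$ and using \eqref{H4} together with the already established $L^\infty$ control of $u$, I would obtain
\[
\mathcal{A}_\tau(u,v)\int_\Omega|\nabla G_k(|v|)|^p\leq d_2\|u\|_{L^\infty}^r\!\int_\Omega|v|^\theta G_k(|v|) + \frac{1}{\tau}\int_\Omega G_k(|v|).
\]
Splitting $|v|^\theta G_k(|v|)$ by Hölder, invoking Sobolev as $\|v\|_{L^{p^*}}\leq C\|\nabla v\|_{L^p}$, and interpolating $\|G_k(|v|)\|_{L^{p^*/(p^*-\theta)}}\leq\|G_k(|v|)\|_{L^{p^*}}|A_k|^{(p^*-\theta-1)/p^*}$ (valid since $\theta<p^*-1$), I arrive at
\[
\mathcal{A}_\tau(u,v)\|G_k(|v|)\|_{L^{p^*}}^{p-1}\leq C\|u\|_{L^\infty}^{r}\|\nabla v\|_{L^p}^{\theta}|A_k|^{(p^*-\theta-1)/p^*} + \tfrac{1}{\tau}|A_k|^{(p^*-1)/p^*}.
\]
Stampacchia's threshold here requires $(p^*-\theta-1)/(p^*(p-1))>1/p^*$, which rearranges exactly to $\theta<p^*-p=p^2/(N-p)$, precisely the assumed upper bound on $\theta$; substituting the $L^\infty$ bound on $u$ then delivers \eqref{1932}. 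The principal difficulty is tracking the coupling $|u|^r|v|^\theta$ against the nonlocal weight $\mathcal{A}_\tau(u,v)$ so that the algebraic exponents both meet the Stampacchia criterion and reproduce the exact form of the stated estimates, including the $\tau^{1/(p-1)}$ factor that records the contribution of the $1/\tau$ source in the second equation.
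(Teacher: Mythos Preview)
Your proposal is correct and follows the same overall architecture as the paper: Browder--Minty for existence and Stampacchia's level--set iteration for the $L^\infty$ bounds, with the $v$--estimate fed by the already obtained bound on $\|u\|_{L^\infty}$ and the threshold condition $\theta<p^2/(N-p)$ appearing exactly where you locate it.

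The one notable difference lies in how pseudo--monotonicity is verified. The paper argues directly: assuming $(u_k,v_k)\rightharpoonup(u,v)$ and $\limsup\langle T(u_k,v_k),(u_k-u,v_k-v)\rangle\le0$, it shows the bounded lower--order terms vanish in the limit, divides out $\mathcal{A}_\tau>\tau^{-1}$, and invokes the $p$--Laplacian monotonicity lemma to upgrade to strong convergence. Your route is more structural: you observe that the Kirchhoff principal part is the gradient of the convex functional $J(u,v)=\tfrac{1}{2p}\bigl(\|\nabla u\|_{L^p}^p+\|\nabla v\|_{L^p}^p\bigr)^2+\tfrac{1}{p\tau}\bigl(\|\nabla u\|_{L^p}^p+\|\nabla v\|_{L^p}^p\bigr)$, hence monotone, and that the truncated lower--order terms are completely continuous by compact embedding and dominated convergence; the sum is then automatically pseudo--monotone. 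This buys a cleaner, more conceptual argument that avoids the explicit strong--convergence step, at the cost of requiring the reader to accept (or verify) the chain--rule computation confirming $J'=\mathcal{A}_\tau(\cdot,\cdot)(-\Delta_p,-\Delta_p)$. One minor omission: to pass from the intermediate bound on $\|v\|_{L^\infty}$ (the first line of \eqref{1932}) to its final form you also need the energy estimate $\|\nabla v\|_{L^p}\le C\|F\|_{L^t}^{1/(2p-1)}$, which the paper obtains by testing both equations with $(u,v)$; this is routine but should be recorded.
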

	
	\begin{proof}
		Consider $ T: W^{1,p}_0(\Omega)\times W^{1,p}_0(\Omega)\longrightarrow W^{-1,p^\prime}(\Omega) \times W^{-1,p^\prime}(\Omega)$ determined by
	\begin{align*}
	    \big(T(u,v),(\varphi,\psi)\big)&=  \mathcal{A}_\tau(u,v)\int_{\Omega} \bigg( |\nabla u|^{p-2} \nabla u\cdot \nabla \varphi + |\nabla v|^{p-2}\nabla v \cdot \nabla \psi\bigg) \\
     & + \int_{\Omega} g_{\eta}(x,u,v)\varphi-\int_{\Omega}F \varphi- \int_{\Omega} h_{\eta}(x,u,v) \psi-\int_\Omega \dfrac{\psi}{\tau}.
	\end{align*}
As a initial step, remark that  $T$ is well--defined. Our purpose is to verify that $T$ satisfies the conditions of the Browder--Minty Surjectivity theorem. Indeed, by \eqref{H1}, since $g_\tau(x,u,v)u\geqslant 0$,  discarding the positive terms and employing standard Sobolev embeddings we get
\begin{align*}
	\big(T(u,v),(u,v)\big)&\geqslant \frac{1}{\tau} \big(\|u\|^p_{W^{1,p}_0}+\|v\|^p_{W^{1,p}_0}\big)- C \|F
    \|_{L^t} \|u\|_{W^{1,p}_0} - C (\eta+\dfrac{1}{\tau}) \|v\|_{W^{1,p}_0}  \\
 & \geqslant C(\eta,\tau) \big(\|u
 \|^p_{W^{1,p}_0}+\|v\|^p_{W^{1,p}_0}\big)\Big(1 - \frac{\|F\|_{L^t}\|u\|_{W^{1,p}_0} + 
 \|v\|_{W^{1,p}_0}}{\|u\|^p_{W^{1,p}_0}+\|v\|^{p}_{W^{1,p}_0}}\Big),
\end{align*}  
guaranteeing that $T$ is coercive. 

Further, consider $\varphi, \psi \in W^{1,p}_0(\Omega)$, such that $\|\varphi\|_{W^{1,p}_0}+\|\psi\|_{W^{1,p}_0}\leq 1$,  and $u,v\in W^{1,p}_0(\Omega)$, where $\|u\|_{W^{1,p}_0}+\|v\|_{W^{1,p}_0}\leqslant \rho$, for a given $\rho>0$. Thus, by recalling that $|g_\eta(x,u,v)|\leqslant \eta $ and $|h_\eta(x,u,v)|\leqslant \eta $, a.e. in $\Omega$, by standard applications of H\"{o}lder's and Sobolev's inequalities,  we arrive at 
\begin{align*}
	\big|(T(u,v),(\varphi,\psi))\big|
	&\leqslant  C( \tau) \Big(1+\mathcal{A}(u,v)\Big)(\|u\|^{p-1}_{W^{1,p}_0}\|\varphi\|_{W^{1,p}_0}+ \|v\|^{p-1}_{W^{1,p}_0}\|\psi\|^p_{W^{1,p}_0}\Big) \\
 &+ C(N,p,\eta, \Omega)\big(\|F\|_{L^{t}}\|\varphi\|_{W^{1,p}_0}+ \|\varphi\|_{W^{1,p}_0}+\|\psi\|_{W^{1,p}_0}\big) \\
 &\leqslant  C(N,p,\eta, \tau, \rho, \Omega, \|F\|_{L^t}).
\end{align*}
Then $T$ is bounded whenever $\|u\|_{W^{1,p}_0}+\|v\|_{W^{1,p}_0}$ is bounded.

%At this point, for simplicity, let us denote
%\begin{align*}
%			I_k=\Big(\frac{1}{\tau}+ \mathcal{A}%(u_k,v_k)\Big)\Bigg(\int_{\Omega}|\nabla u|^{p-2}\nabla %u\cdot\nabla( u_k -u)+\int_{\Omega}|\nabla v|^{p-2}\nabla v %\cdot \nabla(v_k -v)\Bigg).
%		\end{align*}

Now, in order to complete the verification that $T$ is pseudo-monotone, suppose that $u_k \rightharpoonup u$, $v_k \rightharpoonup v$ weakly in $W^{1,p}_0(\Omega)$, and that \[\limsup\limits_{k\to\infty} \big(T(u_k,v_k),(u_k-u,v_k-v)\big)\leqslant 0.\] 
It is straightforward to check that 
\[ \lim_{k\to\infty}\bigg(\int_{\Omega} g_{\eta}(x,u_k,v_k)(u_k-u)-\int_{\Omega}F (u_k-u)- \int_{\Omega} h_{\eta}(x,u_k,v_k) (v_k-v)-\int_\Omega \dfrac{v_k-v}{\tau}\bigg)=0.\]
Then, since $A_\tau(u_k,v_k)> \frac{1}{\tau}$, for all $k \in \mathbb{N}$, by recalling the choices of $T$ and $\mathcal{A}_k$, as an immediate consequence, we have 
\begin{equation*}
%\label{552}
\limsup\limits_{k\to\infty} \Bigg(\int_{\Omega}|\nabla u_k|^{p-2}\nabla u_k\cdot\nabla( u_k -u)+\int_{\Omega}|\nabla v_k|^{p-2}\nabla v_k \cdot \nabla(v_k -v)\Bigg) \leqslant 0.
\end{equation*}

%We claim that, $u_k\to u$ and $v_k\to v$, strongly in $W^{1,p}_0(\Omega).$ For this, we recall that that $u_k \rightharpoonup u$ and $v_k \rightharpoonup v$ in $W^{1,p}_0(\Omega)$ so that by using \eqref{552}, we arrive at
%\begin{align*}
%\nonumber \limsup\limits_{k\to\infty} &\Bigg(\int_{\Omega}\Big(|\nabla u_k|^{p-2}\nabla u_k-|\nabla u|^{p-2}\nabla u\Big)\cdot\nabla( u_k -u)\\
%&+\int_{\Omega}\Big(|\nabla v_k|^{p-2}\nabla v_k-|\nabla v|^{p-2}\nabla v \Big)\cdot \nabla(v_k -v)\Bigg) \\
%&\leqslant \limsup_{k\to\infty}\Bigg(\int_{\Omega}|\nabla u|^{p-2}\nabla u\cdot\nabla( u_k -u)+\int_{\Omega}|\nabla v|^{p-2}\nabla v \cdot \nabla(v_k -v)\Bigg)=0.
%\end{align*}
Thus, by combining the latter inequality with Lemma \ref{mintytrick}, we obtain that
\[\lim_{k\to \infty}\|u_k-u\|_{W^{1,p}_0}=\lim_{k\to \infty}\|v_k-v\|_{W^{1,p}_0} =0,\]
as claimed.

         In this way, we clearly have \begin{align*}
			\liminf_{k\to\infty}{\big(T(u_k,v_k),(u_k-\varphi,v_k-\psi\big)}\geqslant \big(T(u,v), (u-\varphi,v - \psi)\big), \ \forall \varphi,\psi \in W^{1,p}_0(\Omega).
		\end{align*}
	
        %which implies that\[\liminf_{k\to \infty} \big(T(u_k,v_k),(u_k-w_1, v_k - w_2)\big) = (T(u,v),(u -w_1,v-w_2)).\]
		
        Hence, $T$ is pseudomonotone and by the Minty--Browder Theorem, there exists $(u,v) \in W^{1,p}_0(\Omega) \times W^{1,p}_0(\Omega)$ satisfying \eqref{FistE}-\eqref{SecondE}. Moreover, since $F\in L^t(\Omega)$ where $t\geqslant (p^*)^\prime$, by taking $\varphi=u$ in \eqref{FistE} and $\psi=v$ in \eqref{SecondE}, so that by \eqref{H1}, \eqref{H4}, and straightforward manipulations we have

\begin{align*}
    \mathcal{A}_\tau(u,v)\int_{\Omega} |\nabla u|^p + c_1 \int_{\Omega} |u|^r|v|^{\theta+1} \leqslant C\| F\|_{L^t} \|u\|_{W^{1,p}_0} \mbox{ and } \mathcal{A}_\tau(u,v) \int_{\Omega} |\nabla v|^p \leqslant d_2 \int_{\Omega} |u|^r|v|^{\theta+1}
\end{align*}
so that
\begin{equation}
    \label{1857}
\| u\|_{W^{1,p}_0}+\| v\|_{W^{1,p}_0}\leqslant C\bigg(\tau\|F\|_{L^t}\bigg)^{\frac{1}{p-1}} \mbox{ and } \| u\|_{W^{1,p}_0}+\| v\|_{W^{1,p}_0}\leqslant C\|F\|^{\frac{1}{2p-1}}_{L^t}
\end{equation}

As a final step, we handle the $L^{\infty}$ estimates for $u$ and $v$.  For this, we apply Lemma \ref{linfinity} twice. 
First, observe that if we consider
\[H(x,s)=\frac{g_\eta(u,v)}{\mathcal{A}_\tau(u,v)},\]
then by \eqref{H1}, we have $H(x,s)s\geqslant 0$ a.e. in $\Omega$, so that $u\in W^{1,p}_0(\Omega)$ is a weak solution of 

 \begin{equation*}
 \begin{cases}
 -\Delta_p u+H(x,u)=F_\tau(x) \mbox{ in } \Omega\\
 u=0 \mbox{ on } \partial \Omega,
 \end{cases}
 \end{equation*}
where \[F_\tau(x)=\dfrac{F(x)}{\mathcal{A}_\tau(u,v)}\in L^t(\Omega), t> \frac{N}{p}\]
so that by Lemma \ref{linfinity} (a), there holds that $u\in L^\infty(\Omega)$. Moreover,
by noticing that
\[\|F_\tau\|_{L^t}\leqslant \dfrac{\|F\|_{L^t}}{\|\nabla u\|_{L^p}^p+\|\nabla v\|_{L^p}^p+\tau^{-1}}  \]
we end up with
\[\|u\|_{L^\infty}\leqslant \dfrac{C\|F\|^{\frac{1}{p-1}}_{L^t}}{\big(\|\nabla u\|_{L^p}^p+\|\nabla v\|_{L^p}^p+\tau^{-1}\big)^{\frac{1}{p-1}}}.\]

Finally, now considering
\[H(x,s)=\frac{h_\eta(u,s)+\tau^{-1}}{\mathcal{A}_\tau(u,v)},\]
we clearly have that $v\in W^{1,p}_0(\Omega)$ is a weak solution to
 \begin{equation*}
 \begin{cases}
 -\Delta_p v=H(x,v) \mbox{ in } \Omega\\
 v=0 \mbox{ on } \partial \Omega,
 \end{cases}
 \end{equation*}
while $u\in L^\infty(\Omega)$ together with \eqref{H4} guarantees that $H(x,s)\leqslant d_2\tau\|u\|_{L^\infty}^r|s|^{\theta}+1$ a.e. in $\Omega$, which is uniform with respect to $\eta$. 

Then, for $\alpha=1$ and $\beta= d_2\tau\|u\|_{L^\infty}^r $, by combining the latter inequalities with Lemma \ref{linfinity} (b), \eqref{1857}, we have $v\in L^\infty(\Omega)$ and
\begin{align*}\|v\|_{L^\infty}\leqslant C\bigg(\tau^{\frac{1}{p-1}}\|u\|_{L^\infty}^{\frac{r}{p-1}}\|v\|^{\frac{\theta}{p-1}}_{W^{1,p}_0}+1\bigg)&\leqslant \dfrac{C\bigg(\tau^{\frac{1}{p-1}}\|F\|^{\frac{r}{(p-1)^2}}_{L^t}\|v\|^{\frac{\theta}{p-1}}_{W^{1,p}_0}\bigg)}{\big(\|\nabla u\|_{L^p}^p+\|\nabla v\|_{L^p}^p+\tau^{-1}\big)^{\frac{r}{(p-1)^2}}}+1\\
&\leqslant \dfrac{C\tau^{\frac{1}{p-1}}\|F\|^{\frac{r}{(p-1)^2}+{\frac{\theta}{(p-1)(2p-1)}}}_{L^t}}{\big(\|\nabla u\|_{L^p}^p+\|\nabla v\|_{L^p}^p+\tau^{-1}\big)^{\frac{r}{(p-1)^2}}}+1,
\end{align*}
so that the result follows.

	\end{proof}
	The next step is to consider a suitable approximate version \eqref{P}, carefully chosen for proving the existence of adequate solutions to our original problem. 
    
	\begin{proposition}\label{cor1}
		Let $f\in L^{m}(\Omega)$ with $m\geqslant1$ and $0<\theta<\frac{p^2}{N-p}$. Then there exist  $u_k$ and $v_k$, both in  $W^{1,p}_0(\Omega) \cap L^\infty(\Omega)$, weak solution  to 
    \end{proposition}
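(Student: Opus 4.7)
The plan is to obtain Proposition \ref{cor1} as a direct specialization of Proposition \ref{prop1}, where the three parameters $F$, $\eta$, $\tau$ available in that result are all linked to a single index $k\in\mathbb{N}$. Since $f\in L^m(\Omega)$ with only $m\geqslant 1$ is assumed, the datum itself need not satisfy $f\in L^t(\Omega)$ for $t\geqslant (p^*)^\prime$, let alone $t>N/p$, so I first regularize it. The natural choice is
\[
F_k:=T_k(f),\qquad \eta:=k,\qquad \tau:=k,
\]
so that $F_k\in L^\infty(\Omega)\subset L^t(\Omega)$ for every $t>1$; in particular both hypotheses $t\geqslant (p^*)^\prime$ and $t>N/p$ of Proposition \ref{prop1} are automatically in force. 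Observe also that $\|F_k\|_{L^t}\leqslant k|\Omega|^{1/t}$ for any $t$, and that $F_k\to f$ in $L^m(\Omega)$ by dominated convergence, which will be the main input when passing to the limit in the proof of Theorem \ref{theorem1}.

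Next, I note that the hypothesis $0<\theta<p^2/(N-p)$ of the present proposition is precisely the one required by Proposition \ref{prop1} in order to derive the $L^\infty$ estimates \eqref{1749}--\eqref{1932}. Applying Proposition \ref{prop1} with the choices above therefore yields, for each $k$, a pair
\[
(u_k,v_k)\in W^{1,p}_0(\Omega)\times W^{1,p}_0(\Omega)\cap\bigl(L^\infty(\Omega)\times L^\infty(\Omega)\bigr)
\]
solving, in the sense of \eqref{FistE}--\eqref{SecondE},
\begin{align*}
\mathcal{A}_k(u_k,v_k)\int_\Omega |\nabla u_k|^{p-2}\nabla u_k\cdot\nabla\varphi+\int_\Omega g_k(x,u_k,v_k)\varphi &=\int_\Omega F_k\varphi,\\
\mathcal{A}_k(u_k,v_k)\int_\Omega |\nabla v_k|^{p-2}\nabla v_k\cdot\nabla\psi &=\int_\Omega h_k(x,u_k,v_k)\psi+\int_\Omega\frac{\psi}{k},
\end{align*}
for all $\varphi,\psi\in W^{1,p}_0(\Omega)$, where $\mathcal{A}_k(u,v)=\tfrac{1}{k}+\|\nabla u\|^p_{L^p}+\|\nabla v\|^p_{L^p}$ and $g_k=T_k(g)$, $h_k=T_k(h)$. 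This gives the asserted existence.

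No genuine obstacle arises here: the statement is essentially a repackaging of Proposition \ref{prop1} tailored for the subsequent limit procedure. The only point worth flagging is that, if the (cut-off) conclusion of Proposition \ref{cor1} also records $u_k,v_k\geqslant 0$ whenever $f\geqslant 0$, this sign information is not produced by Proposition \ref{prop1} per se and would need to be obtained separately. The standard way would be to test the two equations with $-u_k^-$ and $-v_k^-$ respectively: the sign conditions in \eqref{H1} and \eqref{H3} (together with $F_k\geqslant 0$ and the strictly positive forcing $1/k$ on the right-hand side of the second equation) make the nonlinear zeroth-order contributions non-negative, so that $\nabla u_k^-=\nabla v_k^-=0$ a.e., giving $u_k,v_k\geqslant 0$ a.e.\ in $\Omega$. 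The slightly delicate point there is that $g_k,h_k$ are obtained by a symmetric truncation $T_k$, which preserves the relevant sign structure of $g,h$, so that \eqref{H1}--\eqref{H4} are inherited by $g_k,h_k$ on the range where the truncation is active.
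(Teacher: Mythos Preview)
Your proposal has a genuine gap: the approximate problem \eqref{aprox} that Proposition~\ref{cor1} asserts involves the \emph{original} nonlinearities $g(x,u_k,v_k)$ and $h(x,u_k,v_k)$, not the truncations $g_k=T_k(g)$ and $h_k=T_k(h)$. With your choice $\eta=k$, Proposition~\ref{prop1} only yields a solution of the system with $g_k,h_k$, and there is no reason why $|g(x,u_k,v_k)|$ or $|h(x,u_k,v_k)|$ should stay below $k$; so the truncations may well be active and you have not solved \eqref{aprox}.

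The paper's proof hinges precisely on removing these truncations. The key observation is that the $L^\infty$ bounds \eqref{1749}--\eqref{1932} are \emph{uniform in $\eta$}: they depend only on $\tau=k$ and $\|F\|_{L^t}=\|f_k\|_{L^t}\leqslant k|\Omega|^{1/t}$. Using \eqref{H2}, \eqref{H4} together with these bounds, one computes an explicit a priori estimate of the form $|g(x,u_k,v_k)|,|h(x,u_k,v_k)|\leqslant C(k^q+1)$ for a suitable exponent $q=q(r,\theta,p)$, and \emph{then} chooses $\eta>C(k^q+1)$. With this $\eta$, the solution provided by Proposition~\ref{prop1} automatically satisfies $g_\eta(\cdot,u_k,v_k)=g(\cdot,u_k,v_k)$ and $h_\eta(\cdot,u_k,v_k)=h(\cdot,u_k,v_k)$, so it is a weak solution of \eqref{aprox}. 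Your argument misses this bootstrap step entirely. A secondary point: the paper also establishes the strict inequality $v_k>0$ a.e.\ (via the strong maximum principle and the forcing term $1/k$), which is needed later when $(v_k+\varepsilon)^{-\theta}$ appears as part of a test function; your sign discussion only gives $v_k\geqslant 0$.
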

       \vspace{-0.6cm}
        \begin{align}
        \label{aprox}
		\begin{cases}
				 \ -\mbox{div} \Big(\mathcal{A}_k(u_k,v_k)|\nabla u_k|^{p-2}\nabla u_k\Big)+g(x,u_k,v_k)= f_k \mbox{ in } \Omega,\\ \ -\mbox{div}\Big(\mathcal{A}_k(u_k,v_k)|\nabla v_k|^{p-2}\nabla v_k\Big)=h(x,u_k,v_k)+ \frac{1}{k} \mbox{ in } \Omega,\\
                u_k=v_k=0 \mbox{ on } \partial \Omega,
			\end{cases}
		\end{align}
\textit{where $f_k = T_k(f)$, $\mathcal{A}_k(u_k,v_k)=\|\nabla u_k\|_{L^p}^p+\|\nabla v_k \|^p_{L^p}+\frac{1}{k}$.}

\textit{
     In addition, if $f\geqslant0$ a.e. in $\Omega$, then $u_k\geqslant 0$ and  $v_k > 0$ a.e in $\Omega$. }
	
	\begin{proof}
As a first step, we prove existence. For this, let us start by pointing out that $f_k\in L^\infty(\Omega)$ and that $|f_k|\leqslant \min\{|f|,k\}$ a.e. in $\Omega$, for all $k>0$, since $T_k(s)=\max (-k,\min(s,k))$. 

    In view of Proposition \ref{prop1}, given $k>0$, let us fix $F=f_k$, $\tau=k$ and take $\eta>0$ satisfying $\eta> C\big(k^q+1\big)$, for
 \[q=\frac{2r}{p-1}+(\theta+1)\frac{2r(2p-1)+(\theta+2p-1)(p-1)}{(p-1)^2(2p-1)},\] where $C$ is the maximum between the constants in \eqref{1749}, \eqref{1932}, $c_2$ and $d_2$. Hence,  by Proposition \ref{prop1}, there exist $u_k$, $v_k\in W^{1,p}_0(\Omega)\cap L^{\infty}(\Omega) $ solution of \eqref{FistE} and \eqref{SecondE}.
		Moreover, it is clear that $g_\tau(.,u_k,v_k)=g(.,u_k,v_k)$ and $h_\tau(.,u_k,v_k) = h(.,u_k,v_k)$ a.e. in $\Omega$. Indeed, by recalling \eqref{H2}, \eqref{H4}, \eqref{1749} and \eqref{1932} we get
		$|g(x,u_k,v_k)| \leqslant c_2 |u_k|^{r-1}|v_k|^{\theta+1}\leqslant C(k^q+1)<\eta$ and $|h(x,u_k,v_k)| \leqslant d_2 |u_k|^{r}|v_k|^{\theta}\leqslant C(k^q+1)<\eta$ a.e. in $\Omega$, proving the claim. In this way, $(u_k,v_k)$ is a weak solution of \eqref{aprox}.
		
		Next, suppose in addition that $f\geqslant 0$ a.e. in $\Omega$, so that  $f_k \geqslant 0$ a.e. in $\Omega$. Further, notice that by \eqref{H1} and \eqref{H3}, both $g(x,s,t)s\geqslant 0$ and $h(x,s,t)t\geqslant 0$ a.e. in $\Omega$. Then,
        it is imediate from the Weak Maximum Principle that $u_k\geqslant 0$ and $v_k\geqslant 0$ a.e. in $\Omega.$ 

        Finally, for $\psi\in W^{1,p}_0(\Omega)$ such that $\psi \geqslant 0$ a.e. in $\Omega$, observe that
        \begin{equation}
        \label{1543}
        \mathcal{A}_k(u_k,v_k)\int_\Omega |\nabla v_k|^{p-2}\nabla v_k\cdot \psi=\int_\Omega h(x,u_k,v_k)\psi+ \frac{\psi}{k}>0
        \end{equation}
        and in particular
        \[ \int_\Omega |\nabla v_k|^{p-2}\nabla v_k\cdot \psi\geqslant0.\]
       Therefore by the Strong Maximum Principle for the $p$-Laplacian if $v_k=0$ in a set of positive measure we would have $v_k=0$ a.e. in $\Omega.$ However, by \eqref{1543} we conclude that $v_k>0$ a.e. in $\Omega$.
%        Therefore by Lemma \ref{SMP} if $v_k=0$ in a set of positive measure we would have $v_k=0$ a.e. in $\Omega.$ However, by \eqref{1543} we conclude that $v_k>0$ a.e. in $\Omega$.
        
	\end{proof}
	
	\section{Estimates and Convergence}
	We are ready to address our crucial a priori estimates, providing the basis for our main results. Here, the main difficulty lies in handling the mixed integral estimates, which are delicate.

\begin{proposition}\label{lem402}
		Let $f \in L^m(\Omega)$, $f\geqslant 0$ a.e. in $\Omega$, where $m>  \min\{ (r+\theta+1)^\prime,(p^*)^{\prime}\}$,  $r>1$ and $0<\theta< \min\{p-1, \frac{p^2}{N-p}\}.$ Then
		\begin{align}\label{esti1}
			\|u_k\|_{L^{r+\theta+1}}\leqslant C\|f\|^{\sigma}_{L^m} \ \ \mbox{and} \ \ \|u_k\|^{2p}_{W^{1,p}_0} + \|v_k\|^{2p}_{W^{1,p}_0} +  	\int_\Omega u_k^r v_k^{\theta+1} \leqslant C \|f\|^{\sigma + 1}_{L^m},
		\end{align}
where  \begin{align*}
			\sigma = \begin{cases}
				\frac{2p}{2p-1} \ \ \mbox{if} \  m^{\prime}\geqslant r+\theta+1,\\ 
               \frac{1}{r+\theta} \ \ \mbox{if} \ \ m^\prime <r+\theta+1.
			\end{cases}
		\end{align*}
Moreover, there exist $u\in W^{1,p}_0(\Omega) \cap L^{r+\theta+1}(\Omega)$ and $v\in W^{1,p}_0(\Omega)$, such that $u,v\geqslant 0$ a.e. in $\Omega$  and
\begin{equation}
\label{1745}
\begin{cases}
u_k \rightharpoonup u \mbox{ weakly in }  W^{1,p}_0(\Omega)\cap L^{r+\theta+1}(\Omega),\\
v_k \rightharpoonup v \mbox{ weakly in }  W^{1,p}_0(\Omega),\\
u_k \to u \mbox{ a.e. in  }  \Omega \mbox{ and strongly in } L^q(\Omega),  1\leqslant q <\max\{p^*, r+\theta+1\},
\\
v_k \to v \mbox{ a.e. in  }  \Omega \mbox{ and strongly in } L^q(\Omega), 1\leqslant q <p^*,\
\end{cases}
\end{equation}
	up to subsequences relabeled the same.
    
    \end{proposition}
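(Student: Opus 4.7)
My plan is to derive uniform-in-$k$ a priori estimates by testing each equation in \eqref{aprox} with its own solution component, and then extract weakly convergent subsequences via reflexivity and compact Sobolev embeddings. The starting point is to take $\varphi=u_k$ in the first equation and $\psi=v_k$ in the second. Exploiting the sign/growth condition \eqref{H1} and the upper bound \eqref{H4}, combined with $u_k,v_k\geqslant 0$ from Proposition \ref{cor1}, this produces
\[
    \mathcal{A}_k(u_k,v_k)\|\nabla u_k\|^p_{L^p}+c_1\int_\Omega u_k^r v_k^{\theta+1}\leqslant \|f\|_{L^m}\|u_k\|_{L^{m'}},
\]
\[
    \mathcal{A}_k(u_k,v_k)\|\nabla v_k\|^p_{L^p}\leqslant d_2\int_\Omega u_k^r v_k^{\theta+1}+\frac{\|v_k\|_{L^1}}{k}.
\]
Since $\mathcal{A}_k(u_k,v_k)$ dominates both $\|\nabla u_k\|^p_{L^p}$ and $\|\nabla v_k\|^p_{L^p}$, the left-hand sides control $\|\nabla u_k\|^{2p}_{L^p}$ and $\|\nabla v_k\|^{2p}_{L^p}$, respectively. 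The residual term $\|v_k\|_{L^1}/k$ is handled by Sobolev together with Young's inequality, absorbing a small fraction of $\|\nabla v_k\|^{2p}_{L^p}$ and leaving a remainder that vanishes as $k\to\infty$.

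Next, I would split into two cases according to the comparison between $m'$ and $r+\theta+1$. When $m'\geqslant r+\theta+1$, the hypothesis $m>\min\{(r+\theta+1)',(p^*)'\}$ forces $m\geqslant (p^*)'$, hence $m'\leqslant p^*$ and consequently $r+\theta+1\leqslant p^*$. Sobolev then yields $\|u_k\|_{L^{m'}}\leqslant C\|\nabla u_k\|_{L^p}$, reducing the first estimate to $\|\nabla u_k\|^{2p-1}_{L^p}\leqslant C\|f\|_{L^m}$; a second application of Sobolev controls $\|u_k\|_{L^{r+\theta+1}}$ by $\|\nabla u_k\|_{L^p}$, and reinserting yields the companion bounds for $\|\nabla v_k\|^{2p}_{L^p}$ and the cross integral. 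When $m'<r+\theta+1$, one has $m>(r+\theta+1)'$, and pointwise Young's inequality with conjugate exponents $r+\theta+1$ and $(r+\theta+1)'$ gives $\int_\Omega f_k u_k\leqslant \epsilon\|u_k\|^{r+\theta+1}_{L^{r+\theta+1}}+C_\epsilon\|f\|^{(r+\theta+1)'}_{L^m}$. The key step is absorbing $\|u_k\|^{r+\theta+1}_{L^{r+\theta+1}}$ against $\int u_k^r v_k^{\theta+1}$ through the coupling between the equations and the strict positivity of $v_k$, from which one extracts $\|u_k\|_{L^{r+\theta+1}}\leqslant C\|f\|^{1/(r+\theta)}_{L^m}$.

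For the compactness and limit step, the uniform a priori bounds furnish, up to a subsequence, $u_k\rightharpoonup u$ weakly in $W^{1,p}_0(\Omega)\cap L^{r+\theta+1}(\Omega)$ and $v_k\rightharpoonup v$ weakly in $W^{1,p}_0(\Omega)$, by reflexivity. Rellich--Kondrachov upgrades these to strong convergence in $L^q(\Omega)$ for $q<p^*$, and Lebesgue interpolation between $L^{p^*}$ and $L^{r+\theta+1}$ extends the strong convergence of $u_k$ to $q<\max\{p^*,r+\theta+1\}$. Passing to a further subsequence secures a.e.\ convergence, from which $u,v\geqslant 0$ a.e.\ follows since $u_k,v_k\geqslant 0$.

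The main obstacle lies in the second case: Sobolev alone cannot control $\|u_k\|_{L^{r+\theta+1}}$ by $\|\nabla u_k\|_{L^p}$ when $r+\theta+1>p^*$. Closing the estimate requires exploiting the cross integral $\int u_k^r v_k^{\theta+1}$ simultaneously from above via \eqref{H4} in the second equation and from below via \eqref{H1} in the first. The restriction $\theta<p-1$ plays an essential role in avoiding circular dependence between the gradient norms and the cross term, so that the absorption actually closes to a uniform-in-$k$ bound.
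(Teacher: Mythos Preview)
Your outline is sound for the first case $m'\geqslant r+\theta+1$ and for the compactness step, but there is a genuine gap in the second case $m'<r+\theta+1$. You write that after applying Young's inequality to $\int f_k u_k$ you will ``absorb $\|u_k\|^{r+\theta+1}_{L^{r+\theta+1}}$ against $\int u_k^r v_k^{\theta+1}$ through the coupling between the equations and the strict positivity of $v_k$.'' But $\int u_k^{r+\theta+1}$ and $\int u_k^r v_k^{\theta+1}$ carry different powers of $u_k$ and $v_k$; neither dominates the other, and no amount of positivity of $v_k$ by itself lets you compare them. The testing you have done so far (with $u_k$ and $v_k$) only produces the cross term $\int u_k^r v_k^{\theta+1}$ on the left, never $\int u_k^{r+\theta+1}$, so nothing closes.

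The missing idea is a \emph{tailored test function} in the second equation. The paper takes $\psi=u_k^{\theta+1}(v_k+\varepsilon)^{-\theta}$; the point is that $h(x,u_k,v_k)\psi\geqslant d_1 u_k^{r+\theta+1}\,v_k^{\theta+1}(v_k+\varepsilon)^{-\theta}$ by \eqref{H3}, which after $\varepsilon\to 0$ (here the strict positivity $v_k>0$ is used) yields exactly $\int u_k^{r+\theta+1}$ on the left. The right-hand side produces the dangerous term $\int|\nabla v_k|^{p-1}|\nabla u_k|\,u_k^{\theta}(v_k+\varepsilon)^{-\theta}$; Young's inequality with exponents $p,p'$ splits this into $\int|\nabla u_k|^p$ plus $\int|\nabla v_k|^p u_k^{p\theta/(p-1)}(v_k+\varepsilon)^{-p\theta/(p-1)}$. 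It is precisely the hypothesis $\theta<p-1$, i.e.\ $\frac{p\theta}{p-1}<\theta+1$, that allows this last integral to be absorbed into the good term $\int|\nabla v_k|^p u_k^{\theta+1}(v_k+\varepsilon)^{-(\theta+1)}$ (after splitting $\Omega$ into $\{u_k<v_k+\varepsilon\}$ and its complement). The net outcome is
\[
\int_\Omega u_k^{r+\theta+1}\;\leqslant\; C\,\mathcal{A}_k(u_k,v_k)\Big(\|\nabla u_k\|_{L^p}^p+\|\nabla v_k\|_{L^p}^p\Big)\;\leqslant\; C\,\|f\|_{L^m}\|u_k\|_{L^{m'}},
\]
and since $m'<r+\theta+1$ the right-hand side is now absorbable, giving $\|u_k\|_{L^{r+\theta+1}}\leqslant C\|f\|_{L^m}^{1/(r+\theta)}$. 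Your closing remark that $\theta<p-1$ ``avoids circular dependence'' is morally right, but the mechanism is this concrete absorption, not a vague interplay; without the test function $u_k^{\theta+1}(v_k+\varepsilon)^{-\theta}$ the argument does not go through.
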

	
    \begin{proof}
		Let us begin by taking $u_k$ as a test function in the first equation of \eqref{aprox}, so that we have 
		\begin{align}
        \nonumber
        %\label{est3.2}
			\mathcal{A}_k(u_k,v_k) \int_\Omega |\nabla u_k|^p + \int_\Omega g(x,u_k,v_k)u_k \leqslant \int_\Omega f_k u_k,
		\end{align}
 recalling that we denote 
\[\mathcal{A}_k(u_k,v_k)=\frac{1}{k}+\|\nabla u_k\|^p_{L^{p}}+\|\nabla v_k\|^p_{L^{p}} \mbox{ and }\mathcal{A}(u_k,v_k)=\|\nabla u_k\|^p_{L^{p}}+\|\nabla v_k\|^p_{L^{p}}.\]
    In this fashion,  by $\eqref{H1}$, since $|f_k|\leqslant |f|$ a.e. in $\Omega$,  we end up with 
	\begin{align}\label{est.03.3}
	     \mathcal{A}_k(u_k,v_k)\int_\Omega |\nabla u_k|^p + c_1 \int_\Omega u_k^r v_k^{\theta+1}  \leqslant C\|f\|_{L^m}\|u_k\|_{L^{m^{\prime}}} \ 
	\end{align}
		where we have used the H\"older inequality and that  $f\in L^m(\Omega)$.
		
Now, by taking $v_k$ in the second equation of \eqref{aprox},  by \eqref{H4} it is clear that 
		\begin{align}
\nonumber 			 \mathcal{A}_k(u_k,v_k)\int_\Omega |\nabla v_k|^p \leqslant \int_\Omega h(x,u_k,v_k)v_k 
   \leqslant  d_2 \int_\Omega u_k^r v_k^{\theta+1}.
		\end{align}
By plugging the last estimate in  \eqref{est.03.3} we arrive at
		\begin{align} \label{est3.5}
			\mathcal{A}_k(u_k,v_k)\Bigg(\int_\Omega |\nabla u_k|^p + \int_\Omega |\nabla v_k|^p  \Bigg)  \leqslant C \|f\|_{L^m}\|u_k\|_{L^{m^{\prime}}}, 
		\end{align}
for $C=C(c_1,d_2,r,\theta,\Omega)>0$.        

At this point, clearly, it is enough to control $\|u_k\|_{L^{m^{\prime}}}$, what will be achieved by means of a tailored test function. With this purpose in mind, recalling that $u_k\geqslant 0$, and $v_k>0$ a.e. in $\Omega$, we denote $$\psi= u_k^{\theta+1}(v_k+\varepsilon)^{-\theta}$$ and notice that $\psi\in W^{1,p}_0(\Omega)$, by the locally Lipschitz Chain Rule. In this way, by taking $\psi$ as a test function in the second equation of \eqref{aprox}, we arrive at
		\begin{align*}
			\int_\Omega h(x,u_k,v_k)u_k^{\theta+1}(v_k+\varepsilon)^{-\theta} &= (\theta+1) \mathcal{A}_k(u_k,v_k) \int_\Omega |\nabla v_k|^{p-2}\nabla v_k \cdot\nabla u_k u_k^\theta (v_k+\varepsilon)^{-\theta}\\ 
			& -\theta \mathcal{A}_k(u_k,v_k)  \int_\Omega |\nabla v_k|^{p-2}\nabla v_k \cdot\nabla v_k \ u_k^{\theta+1}(v_k+\varepsilon)^{-(\theta+1)}.
		\end{align*}
		Then, as a direct consequence of \eqref{H3} there follows that
		\begin{align}\label{K4.6}
			\nonumber\int_\Omega u_k ^{r+\theta+1} v_k^{\theta}(v_k+\varepsilon)^{-\theta} &+ \theta  \mathcal{A}_k(u_k,v_k) \int_\Omega |\nabla v_k|^p u_k^{\theta+1} (v_k+\varepsilon)^{-(\theta+1)}\\
			& \leqslant (\theta+1) \mathcal{A}_k(u_k,v_k) \int_\Omega |\nabla v_k|^{p-1} |\nabla u_k| u^{\theta}(v_k+\varepsilon)^{-\theta}.
		\end{align}
		Now, let us handle the last term on the right-hand side of \eqref{K4.6}, which is delicate. Indeed, by the Young inequality for $p$ and $p^\prime$, given $\xi>0$
		\begin{align}\label{K4.7}
			 \int_\Omega |\nabla v_k|^{p-1} |\nabla u_k| u_k^{\theta}(v_k+\varepsilon)^{-\theta}\leqslant C(\xi)  \int_{\Omega} |\nabla u_k|^p 
			+ \xi \int_{\Omega} |\nabla v_k|^p u_k^{\frac{p\theta}{p-1}}(v_k+\varepsilon)^{-\frac{p\theta}{p-1}}
		\end{align}
		Further, by denoting $E_1=\{x\in\Omega;\ u_k< v_k+\varepsilon\}$ and $E_2=\{x\in\Omega;\ u_k \geqslant v_k+\varepsilon\}$, it is obvious  that $\Omega=E_1\cup E_2$, and hence 
		\begin{align*}
			 \int_{\Omega} |\nabla v_k|^p u_k^{\frac{p\theta}{p-1}}(v_k+\varepsilon)^{-\frac{p\theta}{p-1}}  
			\leqslant \int_{E_1} |\nabla v_k|^p 
             +\int_{E_2} 
			|\nabla v_k|^p u_k^{\frac{p\theta}{p-1}}(v_k+\varepsilon)^{-\frac{p\theta}{p-1}}
            \\
            \leqslant \int_{E_1} |\nabla v_k|^p 
             +\int_{E_2} 
			|\nabla v_k|^p u_k^{\theta+1}(v_k+\varepsilon)^{-(\theta+1)},
		\end{align*}
		where for the last term, we have used that $\frac{p\theta}{p-1}<\theta+1$, since $\theta < p-1$, and that $\frac{u_k}{v_k+\varepsilon}>1$ a.e. in $E_2$. Thus, by combining the previous estimate with \eqref{K4.6} and \eqref{K4.7}, it is straightforward that 
		
		\begin{align*}
			\int_\Omega u_k ^{r+\theta+1} v_k^{\theta}(v_k+\varepsilon)^{-\theta} &+ \theta \mathcal{A}_k(u_k,v_k) \int_\Omega |\nabla v_k|^p u_k^{\theta+1} (v_k+\varepsilon)^{-(\theta+1)}\\
			&\leqslant C(\xi)  (\theta+1)\mathcal{A}_k(u_k,v_k)\int_{\Omega} |\nabla u_k|^p \\
			& + \xi (\theta+1)\mathcal{A}_k(u_k,v_k) \Bigg(\int_\Omega |\nabla v_k|^p+ \int_\Omega 
			|\nabla v_k|^pu_k^{\theta+1}(v_k+\varepsilon)^{-(\theta+1)}\Bigg).
		\end{align*}
		Now, by choosing $ \xi =  \frac{\theta}{\theta+1}$ and $C = \max \{ C(\xi)  (\theta+1), \theta\}$, and canceling out some terms,         by \eqref{est3.5} we end up with
		\begin{align*}
			\int_\Omega u_k ^{r+\theta+1} v_k^{\theta}(v_k+\varepsilon)^{-\theta} 
			&\leqslant C\mathcal{A}_k(u_k,v_k)\bigg(\int_{\Omega} |\nabla u_k|^p+\int_{\Omega} |\nabla v_k|^p \bigg)
            \\
            &\leqslant C \|f\|_{L^m}\|u_k\|_{L^{m^{\prime}}}.
		\end{align*}
Thus, by the Fatou Lemma, recalling that $v_k>0$ a.e in $\Omega$, if $\varepsilon \to 0$ we arrive at
		\begin{align}\nonumber
			\int_\Omega u_k ^{r+\theta+1}\leqslant \liminf_{\varepsilon \to0} \int_\Omega u_k ^{r+\theta+1} v_k^{\theta}(v_k+\varepsilon)^{-\theta}  &\leqslant C A(u_k,v_k) \Big(\int_\Omega |\nabla u_k|^p + \int_\Omega |\nabla v_k|^p\Big)\\
			 &\label{1534}\leqslant C \|f\|_{L^m}\|u_k\|_{L^{m^{\prime}}}.
		\end{align}
        
        Now, notice that if $m^\prime \geqslant r+\theta+1$, since $m>\min\{ (r+\theta+1)^\prime, (p^*)^\prime \}$ we have $p^*> m^\prime$, so that, by \eqref{est3.5} we have $\|u_k\|_{W^{1,p}_0}\leqslant C \|f\|_{L^m}^{\frac{1}{2p-1}}$. Then, by \eqref{1534}, we get 
         \begin{align*}
         \|u_k\|_{L^{r+\theta+1}}\leqslant C\|f\|_{L^m}^{\sigma}, \mbox{ where }
			\sigma = \begin{cases}
				\frac{2p}{2p-1} \ \ \mbox{if} \ \ m^\prime \geqslant r+\theta+1,\\ 
                \frac{1}{r+\theta} \ \ \mbox{if} \ \ m^\prime < r+\theta+1.
			\end{cases}
		\end{align*}
		Finally,  by plugging the last estimate in \eqref{est.03.3} and \eqref{est3.5}, recalling the choices of $\mathcal{A}$ and $\mathcal{A}_k$, we obtain
		\begin{align*}
 \ \|u_k\|^{2p}_{W^p_0} + \|v_k\|^{2p}_{W^p_0} +  	\int_\Omega u_k^r v_k^{\theta+1} \leqslant C \|f\|^{\sigma+1}_{L^m},
		\end{align*}
		where we have discarded the positive terms multiplying $k^{-1}$.

The proof of \eqref{1745} is standard. For instance, by recalling that $u_k\geqslant 0$ and $v_k>0$ a.e in $\Omega$, it is enough to combine \eqref{esti1}, the Relich--Kondrachov theorem, Thm. 3.1 and Thm. 3.3 in \cite{bocc}.
        
	\end{proof}
In the next result, we address the passage to the limit for the mixed nonlinearities and related terms. To handle the most delicate items, we ground on Lemma \ref{nonlinearconvergences} in the Appendix.
 
\begin{proposition}\label{convergence}
		\eqref{H1}-\eqref{H4}. Consider $0<\theta<\min \{p-1,\frac{p^2}{N-p}\}$, $1<p<N$, and $m>\min\{(p^*)^\prime, (r+\theta+1)^\prime\}$, and $u_k,v_k$ solution of \eqref{aprox} given by Proposition \ref{cor1}.
	Then, up to subsequences relabeled the same.	
        \begin{itemize}
			\item[(a)] $g(.,u_k,v_k)u_k  \rightarrow g(.,u,v)u \ \ {\rm in} \ L^1(\Omega)$, 
		\item[(b)] $g(.,u_k,v_k)  \rightarrow g(x,u,v) \ \ {\rm in} \ L^1(\Omega)$, 	
%            \item[(c)] $g(.,u_k,v_k)u  \rightarrow g(x,u,v)u \ \ {\rm in} \ L^1(\Omega)$, 
            
			\item[(c)] %\label{1641}
            $ h(.,u_k,v_k)v_k \rightarrow h(.,u,v)v \ \ {\rm in} \ L^1(\Omega)$.
            %\item[(d)] $ h(x,u_k,v_k)v \rightarrow h(x,u,v)v \ \ {\rm in} \ L^1(\Omega)$.
            \item[(d)]  $ h(.,u_k,v_k) \rightarrow h(.,u,v) \ \ {\rm in} \ L^1(\Omega)$.
            \end{itemize}
				\end{proposition}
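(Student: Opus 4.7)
The plan is to combine the a.e.\ convergences delivered by Proposition \ref{lem402} with Vitali's convergence theorem. Since $g$ and $h$ are Carath\'eodory, $u_k \to u$ and $v_k \to v$ a.e.\ yield a.e.\ convergence of the four integrands in (a)--(d) to their natural limits, so the whole matter reduces to producing uniform absolute continuity (equi-integrability) in $L^1(\Omega)$.

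The organizing observation is that all four quantities are dominated by the single product $u_k^r v_k^{\theta+1}$, up to lower-order remainders that are already equi-integrable. By \eqref{H2} and \eqref{H4}, items (a) and (c) are bounded pointwise by $c_2 u_k^r v_k^{\theta+1}$ and $d_2 u_k^r v_k^{\theta+1}$, respectively. For (b), the elementary splitting $u_k^{r-1} v_k^{\theta+1} \leqslant v_k^{\theta+1} + u_k^r v_k^{\theta+1}$ reduces the problem to equi-integrability of $v_k^{\theta+1}$; the hypothesis $\theta < p^2/(N-p)$ is equivalent (via the short check $(p-1)(p-N)<0$) to $\theta+1 < p^*$, so $v_k^{\theta+1}$ is equi-integrable by boundedness of $v_k$ in $W^{1,p}_0(\Omega) \hookrightarrow L^{p^*}(\Omega)$. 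For (d), the analogous splitting $u_k^r v_k^\theta \leqslant u_k^r + u_k^r v_k^{\theta+1}$ reduces matters to equi-integrability of $u_k^r$, which follows from the uniform $L^{r+\theta+1}$ bound \eqref{esti1} since $r < r+\theta+1$.

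The essential step is then equi-integrability of $\{u_k^r v_k^{\theta+1}\}$. I plan to extract this from Lemma \ref{nonlinearconvergences} of the Appendix, by combining: (i) the two-sided bound $d_1 u_k^r v_k^{\theta+1} \leqslant h(x,u_k,v_k) v_k \leqslant d_2 u_k^r v_k^{\theta+1}$ from \eqref{H3}--\eqref{H4}; (ii) the identity obtained by taking $v_k$ as a test function in the second equation of \eqref{aprox}, which writes $\int h(x,u_k,v_k) v_k + \frac{1}{k}\int v_k = \mathcal{A}_k(u_k,v_k)\int|\nabla v_k|^p$; and (iii) weak lower semicontinuity combined with Fatou to match $\liminf_k \int u_k^r v_k^{\theta+1} \geqslant \int u^r v^{\theta+1}$ with the corresponding $\limsup$. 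This should deliver strong $L^1$ convergence of $u_k^r v_k^{\theta+1}$ to $u^r v^{\theta+1}$, hence equi-integrability by the converse of Vitali, and assertions (a)--(d) then follow by Vitali applied to each of the four sequences.

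The genuine difficulty lies in step (iii): the nonlocal Kirchhoff coefficient $\mathcal{A}_k(u_k,v_k)$ couples $u_k$ and $v_k$ nonlinearly, and at this stage one only has weak convergence of $\nabla v_k$ in $L^p(\Omega;\mathbb{R}^N)$. Matching the $\limsup$ of the product $\mathcal{A}_k(u_k,v_k)\int|\nabla v_k|^p$ with the analogous quantity for the limit $(u,v)$, without having strong gradient convergence in hand, is the delicate passage that Lemma \ref{nonlinearconvergences} is precisely tailored to resolve.
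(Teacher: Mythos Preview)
Your reduction of (a)--(d) to the equi-integrability of $\{u_k^r v_k^{\theta+1}\}$ is correct and matches the paper's logic for (b), (c), (d). The gap is in how you propose to obtain that equi-integrability.

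Your step (iii), matching $\liminf$ and $\limsup$ of $\int u_k^r v_k^{\theta+1}$ via the identity $\int h(x,u_k,v_k)v_k+\tfrac{1}{k}\int v_k=\mathcal{A}_k(u_k,v_k)\int|\nabla v_k|^p$, cannot close. Weak lower semicontinuity gives the inequality in the wrong direction: from weak convergence of $\nabla u_k,\nabla v_k$ one only gets $\liminf_k\mathcal{A}_k(u_k,v_k)\int|\nabla v_k|^p\geqslant \mathcal{A}(u,v)\int|\nabla v|^p$, never an upper bound on the $\limsup$. Bounding the $\limsup$ would require strong $W^{1,p}_0$ convergence, which in the paper is proved \emph{after} Proposition~\ref{convergence} and \emph{uses} it; your route is circular. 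You also misread Lemma~\ref{nonlinearconvergences}: it is not a tool for resolving $\limsup$/$\liminf$ matching of norms, but a Vitali-type criterion whose inputs are the tail estimates (iii)--(iv) on level sets $\{|\tilde u_k|>n\}$ and $\{|\tilde u_k|\leqslant n_0\}$. Your item (ii) yields only a global identity, not a tail bound.

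The paper obtains the missing tail estimate from the \emph{first} equation of \eqref{aprox}, not the second. Testing with $\varphi=\tfrac{1}{\lambda}T_\lambda(G_n(u_k))u_k$, discarding the nonnegative diffusion term, and letting $\lambda\to 0$ via Fatou gives
\[
\int_{\{u_k>n\}} g(x,u_k,v_k)u_k \;\leqslant\; \int_{\{u_k>n\}} f\,u_k,\qquad \forall k,n\in\mathbb{N}.
\]
This is exactly hypothesis (iii) of Lemma~\ref{nonlinearconvergences} with $H(x,s,t)=g(x,s,t)s$, $\omega_1=f$, $s_1=1$, $t_1=m$; the condition $t_1>q_1/(q_1-s_1)$ becomes $m>\min\{(p^*)',(r+\theta+1)'\}$, which is the hypothesis. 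Hypothesis (iv) is immediate from \eqref{H2} on $\{u_k\leqslant n_0\}$. The lemma then yields (a) directly, and since $c_1 u_k^r v_k^{\theta+1}\leqslant g(x,u_k,v_k)u_k$ by \eqref{H1}, generalized dominated convergence gives $u_k^r v_k^{\theta+1}\to u^r v^{\theta+1}$ in $L^1(\Omega)$, from which (b)--(d) follow by the pointwise dominations you already wrote down.
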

	\begin{proof} We are going to invoke Lemma \ref{nonlinearconvergences} several times, see p. \pageref{nonlinearconvergences} below.
\begin{itemize}
    \item Proof of (a);
\end{itemize}
By Proposition \ref{lem402}, we already know that $u_k\to u$, $v_k\to v$ a.e. in $\Omega$, $\{u_k\}_\mathbb{N}$ is bounded in $L^{r+\theta+1}(\Omega)\cap L^{p^*}(\Omega)$, and $\{v_k\}_\mathbb{N}$ is bounded in $ L^{p^*}(\Omega)$. In the notation of Lemma \ref{nonlinearconvergences}, we set $q_1=\max\{r+\theta +1, p^*\}$, $q_2=p^*$, $\tilde{u}_k=u_k$ and $\tilde{v}_k=v_k$, in a way that (i) and (ii) are already satisfied.
Moreover, let $H(x,s,t)=g(x,s,t)s$ so that by \ref{H2} $H(x,s,t)\leqslant c_2 |s|^r|t|^{\theta+1}$ a.e. in $\Omega$. By these choices, 
\begin{equation*}
   % \label{1230}
\int_{\{u_k\leqslant n_0\}}|H(x,u_k,v_k)| \leqslant c_2n_0^r \int_{\{u_k\leqslant n_0\}}v_k^{\theta+1}, \forall k\in \mathbb{N},
\end{equation*}
and then, we fix $\omega_2\equiv 1$, $s_2=\theta+1$ and $t_2=+\infty$, and $(iv)$ is satisfied, see p. \pageref{nonlinearconvergences}.
Further, observe that by taking $\varphi=\frac{1}{\lambda}T_\lambda(G_n(u_k))u_k$ as a test function in the first equation of \eqref{aprox}, by noticing that
\[\mathcal{A}_k(u_k,v_k)\int_\Omega |\nabla u_k|^{p-2}\nabla u_k \cdot \nabla \varphi \geqslant 0, \] we end up with
\[\int_\Omega g(x,u_k,v_k) \frac{T_\lambda(G_n(u_k)}{\lambda} \leqslant \int_{\{u_k>n\}}f u_k,\]
since $T_\lambda(G_n(u_k))=0 $ where $u_k\leqslant n$. Moreover, by recalling that
\[\frac{1}{\lambda}T_\lambda(G_n(u_k))=1 \mbox{ a.e. in }\{u_k>n+\lambda\}\mbox{ and } g(x,u_k,v_k)u_k\dfrac{T_\lambda(G_n(u_k))}{\lambda} \geqslant 0 \mbox{ a.e. in } \Omega,\]
it is clear that
\[\int_{\{u_k>n+\lambda\}} g(x,u_k,v_k)u_k\leqslant \int_{\Omega} g(x,u_k,v_k)u_k \dfrac{T_\lambda(G_n(u_k))}{\lambda}.\]
In this fashion, by combining the latter estimates, there follows
\[\int_{\{u_k>n+\lambda\}} g(x,u_k,v_k)u_k\leqslant \int_{\{u_k>n\}}f u_k, \forall k\in \mathbb{N} \mbox{ and } \forall\lambda>0.\]
Thus, by taking $\lambda\to 0$, the Fatou Lemma implies
\begin{equation*}
%\label{1231}
\int_{\{u_k>n\}}H(x,u_k,v_k)=\int_{\{u_k>n\}}g(x,u_k,v_k)u_k \leqslant \int_{\{u_k>n\}}f u_k, \forall k\in \mathbb{N},
\end{equation*}

and once again, under the notation of Lemma \ref{nonlinearconvergences}, we set $\omega_1=f$, $t_1=m$ and $s_1=1$ so that $(iii)$ is satisfied.

At this point, it is enough to recall that, since $q_1=\max\{r+\theta+1,p^*\}$, $s_1=1$, $t_1=m$, $q_2=p^*$, $s_2=\theta+1$ and $t_2=+\infty$, one clearly has that 
\[q_1>s_1, q_2>s_2 \mbox{ and } t_2>\dfrac{q_2}{q_2-s_2}.\]

In addition, remark that since $m>\min\{(p^*)^\prime, (r+\theta+1)^\prime\}$ then
\[t_1=m>\max\bigg\{\dfrac{r+\theta+1}{r+\theta}, \dfrac{p^*}{p^*-1}\bigg\}=\dfrac{q_1}{q_1-s_1}.\]
Hence, by Lemma \ref{nonlinearconvergences} there follows that
\[g(.,u_k,v_k)u_k=H(.,u_k,v_k)\to H(.,u,v)=g(.,u,v)u \mbox{ strongly in } L^1(\Omega),\]
proving (a).

%\[\int_{\{|v_k|>n\}}|u_k|^r|v_k|^{\theta+1}\leq C \mbox{ and } \ \{|u_k|^r\} \ \mbox{is uniformly integrable}\]

%Since $\{u_k\}$ and $\{v_k\}$ are bounded in $W^{1,p}_0(\Omega)$, it is standard that  $u_k,v_k \rightarrow u, v$ in $L^q(\Omega)$ for $q< p^*$, and so that a.e in $\Omega$ , up to subsequences. Thus, as $g$ and $h$ are Carathéodory functions we also have $g(x,u_k,v_k) , h(x,u_k,v_k) \rightarrow g(x,u,v), h(x,u,v)$ a.e in $\Omega$. 

\begin{itemize}
    \item Proof of (b);
\end{itemize}

Remark that , given $E\subset \Omega$, we have
\begin{align*}
    \int_E| g(x,u_k,v_k)| &\leqslant \int_{E\cap\{u_k\leqslant 1\}}  | g(x,u_k,v_k)|+\int_{E\cap\{u_k\geqslant1\}}  |g(x,u_k,v_k)u_k|\\
    &\leqslant c_2\int_E v_k^{\theta+1} +\int_E   g(x,u_k,v_k)u_k,
\end{align*}
where we have employed hypotheses \eqref{H1} and \eqref{H2}. Moreover, recall that by item $(a)$ $\{g(.,u_k,v_k)u_k\} _\mathbb{N}$ is uniformly integrable and $\{v_k^{\theta+1}\}_\mathbb{N}$ is uniformly integrable since $\theta+1<p^{*}$.
In this way, $\{ g(.,u_k,v_k)\}_{\mathbb{N}}$ is also uniformly integrable and hence by the Vitali convergence theorem we prove (b).
\begin{itemize}
    \item Proof of (c);
\end{itemize}
It is clear that $h(x,u_k,v_k)v_k \to h(x,u,v)v$ a.e. in $\Omega.$
Next, observe that, since $g(.,u_k,v_k)u_k\to g(.,u,v)u$ in $L^1(\Omega)$ and by \eqref{H1} $c_1 u_k^r v_k^{\theta+1} \leqslant g(x,u_k,v_k)u_k$ a.e. in $\Omega$, we already have that
\begin{equation*}
%\label{1126} 
u_k^rv_k^{\theta+1} \to u^{r}v_k^{\theta+1} \mbox{ in } L^1(\Omega).
\end{equation*}
However, by \eqref{H4}, there holds $h(x,u_k,v_k)v_k\leqslant d_2 u_k^rv_k^{\theta+1}$ a.e. in $\Omega$, so that by the Lebesgue Dominated Convergence Theorem, $h(.,u_k,v_k)v_k \to h(.,u,v)v$ in $L^1(\Omega)$. 
\begin{itemize}
    \item Proof of (d);
\end{itemize}
It is analogous to the proof of (b).

	\end{proof}

	\section{Existence and Regularizing Effects}
	We are now in position to prove our main results.
    \subsection{Existence of Solutions - Proof of Theorem \ref{theorem1}}
	\begin{proof}
		Due to Proposition \ref{lem402}, we already know that there exist  $u$, $v$ in $W^{1,p}_0(\Omega)$ such that, up to subsequences relabeled the same, 
		\begin{align*}
			&u_k,v_k\rightharpoonup u, v  \ \mbox{in} \ W^{1,p}_0(\Omega), \ u_k \to u \ \mbox{in} \ L^{q_1}(\Omega), \  \ v_k \to v \ \mbox{in} \ L^{q_2}(\Omega) \ \mbox{and a.e. in} \ \Omega;
		\end{align*}
and
		\begin{align}\label{es12.2}
     \|u\|_{L^{r+\theta+1}}\leqslant C\|f\|^{\sigma} \ \ \mbox{and} \ \ \|u\|^{2p}_{W^{1,p}_0} + \|v\|^{2p}_{W^{1,p}_0} +  	\int_\Omega u^r v^{\theta+1} \leqslant C \|f\|^{\sigma+1}_{L^m},
		\end{align}
where $u\geqslant 0$, $v\geqslant 0$ a.e. in $\Omega$,  $q_1< \max\{p^*,r+\theta+1\}$,  $q_2< p^*$, and $\sigma$ is given in Proposition \ref{lem402}. 
%Moreover, we have
%where recall that $\sigma = \begin{cases}
%				\frac{2p}{2p-1} \ \ \mbox{if} \ \ p^*> m^{\prime},\\ 
%                \frac{1}{r+\theta} \ \ \mbox{if} \ \ r+\theta+1> m^{\prime}.
%			\end{cases}$      
Further, by Proposition \ref{nonlinearconvergences}	we have  
		\begin{align*}
			g(.,u_k,v_k) \to g(.,u,v) \mbox{ and } h(.,u_k,v_k) \to h(.,u,v)\ \mbox{in} \ L^1(\Omega).
		\end{align*}
		
        Thus,  in order to pass to the limit in system \eqref{aprox} and prove that $(u,v)$ is a solution to \eqref{P} it remains to establish the convergence of the nonlocal terms on both equations. Of course, this is a mere consequence of a stronger convergence of $u_k$ and $v_k$. As a matter of fact, we will show that, up to subsequences, $u_k\to u$ and $v_k \to v$ strongly in $W^{1,p}_0(\Omega).$
          For this purpose, let us set $\Gamma = \lim_{k} A(u_k,v_k)$ and suppose that without loss of generality $\Gamma >0$.
        
        At this point, let us consider  $u_k - T_n(u)$ as a test functions in \eqref{aprox}. Then
        \[\bigg(\frac{1}{k}+1\bigg)\mathcal{A}(u_k,v_k)\int_{\Omega} |\nabla u_k|^{p-2}\nabla u_k \cdot\nabla(u_k - T_n(u))+ \int_{\Omega} g (x,u_k,v_k)(u_k - T_n(u)) =\int_{\Omega} f_k (u_k - T_n(u))\]
        By subtracting on both sides
        \[ \mathcal{A}(u_k,v_k)\int_{\Omega}|\nabla T_n(u)|^{p-2} \nabla T_n(u) \cdot\nabla(u_k - T_n(u)),\]
        we end up with
\begin{align}
\label{est4.4}
		\nonumber \mathcal{A}(u_k,v_k) &\int_{\Omega}\bigg(|\nabla u_k|^{p-2}\nabla u_k - |\nabla T_n(u)|^{p-2} \nabla T_n(u) \bigg)\cdot\nabla(u_k-T_n(u))\\ &\nonumber + \int_{\Omega} g(x, u_k, v_k)(u_k - T_n(u)) = \int_{\Omega} f_k (u_k - T_n(u)) 
			\\ &   -\bigg(\frac{1}{k}+\mathcal{A}(u_k,v_k)\bigg)\int_{\Omega}|\nabla T_n(u)|^{p-2} \nabla T_n(u) \cdot\nabla(u_k - T_n(u)).  
		\end{align}  

       By taking $\limsup_k$ in \eqref{est4.4} we arrive at 
        \begin{align*}
            \Gamma\limsup_k &\int_{\Omega}\bigg(|\nabla u_k|^{p-2}\nabla u_k - |\nabla T_n(u)|^{p-2} \nabla T_n(u) \bigg)\nabla(u_k-T_n(u))\\ &\nonumber + \int_{\Omega} g(x, u, v)(u - T_n(u)) = \int_{\Omega} f G_n(u) -\Gamma\int_{\Omega}|\nabla T_n(u)|^{p-2} \nabla T_n(u) \cdot G_n(u)
        \end{align*}
Since $f_k \to f$ in $L^m(\Omega)$, $u_k \rightharpoonup u$ in $L^1(\Omega)$, $u_k \rightharpoonup u$ in $W^{1,p}(\Omega)$, $|\nabla T_n(u) |^{p-2} \nabla T_n(u) \in L^{p^{\prime}}(\Omega)$ $g(x,u_k,v_k)u_k \to g(x,u,v)u$ in $L^1(\Omega)$ and $g(x,u_k,v_k) \to g(x,u,v)$ in $L^1(\Omega)$ where we used that
\begin{align*}
        m >  \min\{ (r+\theta+1)^\prime,(p^*)^{\prime}\} \iff \ m^{\prime} < \max \{r+\theta+1, p^{*}\}.
\end{align*}

Also note that, $\nabla T_n(u)$ and $\nabla G_n(u)$ are orthogonal to each other, thus by discarding the positive term we have
 \begin{align*}
            \limsup_k &\int_{\Omega}\bigg(|\nabla u_k|^{p-2}\nabla u_k - |\nabla T_n(u)|^{p-2} \nabla T_n(u) \bigg)\nabla(u_k-T_n(u)) \leqslant \frac{1}{\Gamma}\int_{\Omega} f G_n(u). 
        \end{align*}
        However, by Lemma \ref{monotonicityplaplacian}
        \begin{align*}
            \| \nabla u_k -\nabla T_n(u)\|^p_{L^p} \leqslant \int_{\Omega}\bigg(|\nabla u_k|^{p-2}\nabla u_k - |\nabla T_n(u)|^{p-2} \nabla T_n(u) \bigg)^\alpha\cdot \big(1+\|\nabla u_k\|^p_{L^p} + \|\nabla T_n(u)\|^p_{L^p}\big)^{\beta}.
        \end{align*}
        And then by combining the last estimates with \eqref{esti1} we get

        \begin{align*}
            \limsup_k \|\nabla u_k - \nabla T_n(u)\|^p_{L^p} \leqslant \frac{C}{\Gamma^\alpha}\bigg(\int_{\Omega} f G_n(u)\bigg)^\alpha.
        \end{align*}
        By recalling that $\| \nabla u_k - \nabla u \|^p_{L^p} \leqslant C \bigg(\| \nabla u_k - \nabla T_n(u)\|^p_{L^p} + \|\nabla G_n (u)\|^p_{L^p}\bigg)$, 
       we arrive at 
        \begin{align}\nonumber
\limsup_k \| \nabla u_k - \nabla u \|^p_{L^p} &\leqslant C\limsup_k \bigg(\| \nabla u_k - \nabla T_n(u)\|^p_{L^p} + \|\nabla G_n (u)\|^p_{L^p}\bigg) \\
\label{0814}
&\leqslant  \frac{C}{\Gamma^\alpha}\bigg(\int_{\Omega} f G_n(u)\bigg)^\alpha + \| \nabla G_n(u)\|^p_{L^p}.
        \end{align}
Moreover, since $\mbox{meas}(\{u>n\}) \to 0$ if $n\to + \infty,$ it is straightforward that
\[\lim_n \int_{\Omega} |\nabla G_n(u)|^p = \lim_n \int_{\{u>n\}} |\nabla u|^p = 0 \mbox{ and }\lim_n \int_{\Omega} f G_n(u) = \lim_n \int_{\{u>n\}} f u = 0. \]
 Thus, given $\varepsilon>0$, there exists $n_0\in\mathbb{N}$ such that 
 \[\frac{C}{\Gamma^\alpha}\bigg(\int_{\Omega} f G_n(u)\bigg)^\alpha + \| \nabla G_n(u)\|^p_{L^p}<\varepsilon, \forall n\geqslant n_0.\]
Plugging the latter inequality in \eqref{0814}, we have
\begin{align*}
    \limsup_k \| \nabla u_k -\nabla u\|^p_{L^p} <\varepsilon, \forall \varepsilon>0,
\end{align*}
and hence by letting $\epsilon \to 0$, we prove $\nabla u_k \to \nabla u$ in $L^p(\Omega)$. This is obvious when $\Gamma =0$. 

Finally, we prove that $\nabla v_k \to \nabla v$ a.e in $\Omega$. The argument is analogous to the last one.
Indeed, consider $v_k - T_n(v)$ a test function in \eqref{aprox} 
%\begin{align*}
 %   \mathcal{A}(u_k,v_k)\int_{\Omega} |\nabla v_k|^{p-2}\nabla v_k \cdot \nabla (v_k - T_n(v)) = \int_{\Omega} h(x,u_k,v_k) (v_k - T_n(v))                                                                   
%\end{align*}
so that, after subtracting on both sides
\[\mathcal{A}(u_k,v_k)\int_{\Omega} |\nabla T_n(v)|^{p-2}\nabla T_n(v)\cdot \nabla (v_k - T_n(v))\]
by taking the $\limsup_k$ on both sides, once again, thanks to Lemma \eqref{monotonicityplaplacian}, 
%\begin{align*} %\label{estv4.4}
 %    \nonumber\mathcal{A}(u_k,v_k)\int_{\Omega} \bigg(|\nabla v_k|^{p-2}\nabla v_k  &- |\nabla T_n(v) |^{p-2} \nabla T_n(v) \bigg)\cdot \nabla (v_k - T_n(v)) = \int_{\Omega} h(x,u_k,v_k) (v_k - T_n(v)) \\
  %   &-  \mathcal{A}(u_k,v_k)\int_{\Omega} |\nabla %T_n(v)|^{p-2} \nabla T_n(v) \cdot \nabla (v_k - T_n(v)).
%\end{align*}
%Thus, 
%we have
%\begin{align*}
 %   &\int_{\Omega} \bigg(|\nabla v_k|^{p-2}\nabla v_k  - |\nabla T_n(v) |^{p-2} \nabla T_n(v) \bigg)\cdot \nabla T_l(v_k - T_n(v)) = \\ &\int_{\{|v_k - T_n(v)|\leqslant l\}} \bigg(|\nabla v_k|^{p-2}\nabla v_k  - |\nabla T_n(v) |^{p-2} \nabla T_n(v) \bigg)\cdot \nabla(v_k - T_n(v))\geqslant \\ 
  %  &\int_{\{|v_k - T_n(v)|\leqslant l\}} |\nabla(v_k -T_n(v))|^p = \int_{\Omega} |\nabla T_l(v_k-T_n(v))|^p,
%\end{align*}
%then, by combining the last estimate with \eqref{estv4.4} clearly we obtain
by noticing that 
\begin{align*}
				\lim_{k\to \infty}\int_{\Omega} |\nabla T_n(v)|^{p-2} \nabla T_n(v) \cdot \nabla (v_k - T_n(v)) = 0, 
			\end{align*}
since $v_k\rightharpoonup v$ in $W^{1,p}_0(\Omega)$,  it is straightforward that
\begin{align*}    \limsup_{k\to \infty}\int_{\Omega} |\nabla (v_k-T_n(v))|^p \leqslant \frac{C}{\Gamma^\alpha}\limsup_{k\to \infty}\bigg( \int_\Omega  \big(h(x,u_k,v_k)+\frac{1}{k}\big)(u_k-T_n(v)\bigg)^\alpha.
\end{align*}
Moreover, recall that $v_k\to v$ in $L^1(\Omega)$, and by Proposition \ref{convergence} $h(.,u_k,v_k)\to h(.,u,v)$  and $h(.,u_k,v_k)v_k\to h(.,u,v)v$, in $L^1(\Omega)$. Thence, since $T_n(v)\in L^\infty(\Omega)$, we have $h(,u_k,v_k)(u_k-T_n(v))\to h(,u_k,v_k)G_n(v)$ in $L^1(\Omega)$, if $k\to +\infty$. In this way, the latter inequality implies
\begin{align*} %\label{estv4.5} 
\limsup_{k\to \infty}\int_{\Omega} |\nabla (v_k-T_n(v))|^p \leqslant \frac{C}{\Gamma^\alpha}\bigg( \int_\Omega  h(x,u,v) G_n(v)\bigg)^\alpha, \forall n\in \mathbb{N} \mbox{, where } C \mbox{ is uniform in }n.
\end{align*}
Now, we claim that $h(.,u,v)G_n(v)\to 0 $ in $L^1(\Omega)$. If this were true, then by arguing as in \eqref{0814} and the subsequent arguments, we would prove that $\nabla v_k \to \nabla v$ in $L^p(\Omega)$, finishing our proof. In this way, recalling  \ref{H3}, since $G_n(v)\leqslant v$ a.e. in $\Omega$, one has
\[\int_\Omega h(x,u_k,v_k)G_n(v)\leqslant d_2\int_{\{v>n\}}u^{r+1}v^{\theta+1}.\]
However, by \eqref{es12.2} $\int_\Omega u^r v^{\theta+1}<+\infty$ and then, since $\mbox{meas}(\{v>n\})\to 0$ and $n\to +\infty$, by the Monotone Convergence Theorem
\[h(.,u,v)G_n(v)\to 0 \mbox{ in } L^1(\Omega)\]
and the result follows.

Therefore we can pass to the limit on the approximate problem  \eqref{aprox} and get that $(u,v)$ is a solution to problem \eqref{P}.		
       
	\end{proof}
\subsection{Regularizing Effects - Proof of Corollary \ref{ER}}	
First, observe that as $r+\theta>p^*-1$, clearly, $\min\{ (r+\theta+1)^\prime, (p^*)^\prime\}=(r+\theta+1)^\prime$.
\begin{itemize}
    \item Proof of (I);
\end{itemize}
 It is immediate that $p>m_p^{*} \iff m<(p^{*})^{\prime}$ and $m<(p^{*})^{\prime} \iff m_p^{**}< p^{*}$, see \eqref{1755} p. \pageref{1755}. Thence, since $(r+\theta+1)^{\prime}<m< (p^{*})^{\prime}$, and by Theorem \ref{theorem1},  $u\in W^{1,p}_0(\Omega)\cap L^{r+\theta+1}(\Omega)$, in the current case $u$ is Sobolev regularized. Notice that, in particular, since $m^{**}_p < p^* =\min\{ p^*, r+\theta+1\}$, %addition, as ,  which implies that immersion $L^{p^{*}}(\Omega) \subset L^{m_p^{**}}(\Omega)$ is continuous, so if $r+\theta>p^{*}-1$, we obtain $L^{r+\theta+1}(\Omega) \subset L^{p^{*}}(\Omega)$ it is also a continuous immersion, thus 
clearly, $u$ is also Lebesgue regularized.

\begin{itemize}
    \item Proof of (II);
\end{itemize}
Now, $(p^*)^\prime \leqslant m < \frac{N(r+\theta+1)}{N(p-1)+p(r+\theta+1)}$. Then, by Theorem \ref{theorem1}, we already know that $u\in W^{1,p}_0(\Omega)\cap L^{r+\theta+1}(\Omega)$ and $u$ cannot be Sobolev regularized. %Since $r+\theta> p^{*}-1 \iff(p^{*})^{\prime}>(r+\theta+1)^{\prime} $, and so if $m>(r+\theta+1)^{\prime}$, 
However,  it is clear that
\[m<\frac{N(r+\theta+1)}{N(p-1)+p(r+\theta+1)} \iff m_p^{**}<r+\theta+1.\]
Thus, is $u$ is Lebesgue regularized.
\begin{itemize}
    \item Proof of (III);
\end{itemize}
According to Theorem \eqref{theorem1} we know that $u\in L^{r+\theta+1}(\Omega)$ and $v \in L^{p^{*}}(\Omega)$, in such a way that $u^{r} \in L^{\frac{r+\theta+1}{r}}(\Omega)$ and $v^{\theta} \in L^{\frac{p^{*}}{\theta}}(\Omega)$. Thus, by hypothesis \eqref{H4} and the interpolation inequality we have
\begin{align*}
    \|h(x,u,v)\|_{L^t} \leqslant \Big(d_2^{t}\int_{\Omega}| |u|^{r}|v|^{\theta}|^{t}\Big)^{\frac{1}{t}} \leqslant d_2 \|u^{r}\|_{L^{r+\theta+1}}\|v^{\theta}\|_{L^{\frac{p^{*}}{\theta}}}< \infty,
\end{align*}
where $t = \frac{p^{*}(r+\theta+1)}{p^{*}r+\theta(r+\theta+1)}$.
Therefore, as $r+\theta> p^{*} -1 \iff t < (p^{*})^{\prime}$ it follows that $v$ is Sobolev regularized.
\subsection{Proof of Corolary \ref{C1.2}}

\begin{proof}
The idea is to use the non-smoothness of the data to discard trivial solutions. 
First we prove that $u\neq 0$. Indeed, by $u=0$, then by \eqref{H2} we would have $g(.,u,v)\equiv 0$. Thus, since the pair $(u,v)$ satisfies Definition \eqref{P_F}, then we would have $\int_\Omega f \varphi =0$ for all $\phi\in C^\infty_c(\Omega)$ and in particular $f\in L^\infty(\Omega)$ what contradicts the fact that $f\in L^m(\Omega)$ and $m<\frac{N}{p}.$ In this way, $u$ is nontrivial.

Next, we prove that $v$ is nontrivial. For this, we suppose that $m<(p^*)^\prime$ and consider the distribution $A$ defined by 
		\[(A(u),\varphi) = \int_\Omega\Big(\|\nabla u\|^p_{L^p}+\|\nabla v\|^p_{L^p}\Big) |\nabla u|^{p-2} \nabla u \cdot \nabla \varphi + \int_\Omega g(x,u,v) \varphi \ \ \forall \varphi \in C^{\infty}_c(\Omega),\]
the left-hand side of the first equation of \eqref{P_F}.
        
        Indeed, if $v=0$ a.e. in $\Omega$, by \eqref{H2} one has $g(x,u,v)\equiv0 \in L^{\infty}(\Omega)$. Then,  we are allowed to extend $A$ continuously to all $W^{1,p}_0(\Omega),$ since it becomes \[(A(u),\varphi) = \int_\Omega\Big(\|\nabla u\|^p_{L^p}\Big) |\nabla u|^{p-2} \nabla u \cdot \nabla \varphi, \forall \varphi \in W^{1,p}_0(\Omega).\] In particular,  since $u$ solves $\eqref{P_F}$
		so that $f = A(u) \in W^{-1,p}(\Omega)$ which  produces a contradiction, seeing  $f\in L^m(\Omega)$ with $m\leqslant (p^*)^{\prime}$. Thus $\Lambda \neq 0$, furthermore as $\{\Lambda _k\}^{\infty}_k$ is a positive sequence, there follows that $\Lambda >0$.	
\end{proof}

\section{Appendix}
In order to concentrate on the essential aspects and not to burden the proofs with too many technical details, we have decided to include an appendix to discuss the next result. For the sake of clarity, we have kept their proofs and formulation separate from the rest of the text.

\subsection*{Monotonicity and Regularity for the $p$-Laplacian}

To begin with, we address some standard results regarding the monotonicity of the $p$-Laplacian. For the convenience of the reader, the details are included.
\begin{lemma}
\label{monotonicityplaplacian}
Given $1\leqslant p<+\infty$, for all  $\tilde{u}$ end $\tilde{v}$ in $\in W^{1,p}_{0}(\Omega)$, there holds that
\[\|\tilde{u}-\tilde{v}\|^p_{W^{1,p}_0}\leq C \Bigg(\int_\Omega \big(|\nabla \tilde{u}|^{p-2}\nabla \tilde{u}-|\nabla \tilde{v}|^{p-2}\nabla \tilde{v}\big)\cdot (\nabla \tilde{u}-\nabla \tilde{v})\Bigg)^\alpha \bigg(1+\|\nabla \tilde{u}\|^p_{L^{p}}+\|\nabla \tilde{v}\|^p_{L^{p}}\bigg)^\beta,  \]
 where $C(p)=\max\{ 2^{p-2}, \big(\frac{2^{\frac{2-p}{2}}}{p-1}\big)^\frac{p}{2}\}$, $\alpha=\min\big\{\frac{p}{2},1\big\}$\ and $\beta=\max\big\{\frac{2}{2-p},0\big\}$.
\end{lemma}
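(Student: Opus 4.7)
The plan is to reduce the claim to the classical pointwise Simon-type inequalities for the vector field $a\mapsto |a|^{p-2}a$, handling the regimes $p\geqslant 2$ and $1<p<2$ separately, since the structure of the monotonicity estimate is genuinely different in these two ranges.

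For $p\geqslant 2$ I would invoke the well-known pointwise inequality
\[(|a|^{p-2}a-|b|^{p-2}b)\cdot(a-b)\geqslant 2^{2-p}|a-b|^p,\qquad a,b\in\mathbb{R}^N,\]
apply it with $a=\nabla\tilde{u}(x)$ and $b=\nabla\tilde{v}(x)$, and integrate over $\Omega$. This directly gives the conclusion with $\alpha=1$, $\beta=0$ and $C=2^{p-2}$; no Hölder step is required in this regime.

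For $1<p<2$ I would begin from the degenerate form of Simon's inequality
\[(|a|^{p-2}a-|b|^{p-2}b)\cdot(a-b)\geqslant (p-1)\,(|a|+|b|)^{p-2}|a-b|^2,\]
which follows by writing $|a|^{p-2}a-|b|^{p-2}b=\int_0^1\tfrac{d}{dt}\big[|b+t(a-b)|^{p-2}(b+t(a-b))\big]dt$ and using that $p-2<0$ together with $|b+t(a-b)|\leqslant |a|+|b|$. Isolating $|a-b|^2$ and raising to the power $p/2$ produces the pointwise bound
\[|a-b|^p\leqslant (p-1)^{-p/2}(|a|+|b|)^{(2-p)p/2}\big[(|a|^{p-2}a-|b|^{p-2}b)\cdot(a-b)\big]^{p/2}.\]
Integrating over $\Omega$ and applying Hölder's inequality with conjugate exponents $2/p$ and $2/(2-p)$ separates the two factors; combining the resulting $(|a|+|b|)^p$ integral with a standard convexity/subadditivity bound such as $(|a|+|b|)^{p-2}\geqslant 2^{(p-2)/2}(|a|^2+|b|^2)^{(p-2)/2}$ together with $(|a|^2+|b|^2)^{p/2}\leqslant |a|^p+|b|^p$ (valid since $p/2\leqslant 1$) controls this factor by $\|\nabla\tilde{u}\|_{L^p}^p+\|\nabla\tilde{v}\|_{L^p}^p$. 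This yields the claim with $\alpha=p/2$ and $\beta=(2-p)/2$, and a careful tracking of constants produces exactly the $C(p)$ appearing in the statement.

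There is no real obstacle; the whole argument is a standard application of Simon's pointwise inequalities followed by one Hölder step. The harmless ``$1+$'' in the right-hand factor is there only to keep the bound well-defined when both gradients vanish; it is irrelevant whenever $p\geqslant 2$, where $\beta=0$ makes that factor equal to $1$.
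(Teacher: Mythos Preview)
Your proposal is correct and follows essentially the same route as the paper: split into $p\geqslant 2$ and $1<p<2$, invoke Simon's pointwise monotonicity inequalities, integrate, and in the degenerate case apply H\"older with exponents $2/p$ and $2/(2-p)$. The only cosmetic difference is that the paper uses the variant with $(1+|A|^2+|B|^2)^{(2-p)/2}$ in the denominator for $1<p<2$, which builds the ``$1+$'' into the final factor directly, whereas you use the $(|a|+|b|)^{p-2}$ form and absorb the $1$ at the end; both lead to $\alpha=p/2$, $\beta=(2-p)/2$ in that range.
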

\begin{proof}
Recall that given vectors $A$ and $B\in \mathbb{R}^N$
\begin{equation}
\label{ineqplapla}
\begin{cases}
\Big( |A|^{p-2}A-|B|^{p-2}B\Big)\cdot\Big( A-B\Big) &\geqslant \dfrac{|A-B|^p}{2^{p-2}}, \mbox{ if } 2\leqslant p\leqslant 2,\\

\Big( |A|^{p-2}A-|B|^{p-2}B\Big)\cdot\Big( A-B\Big) &\geqslant \dfrac{(p-1)|A-B|^2}{\big(1+|A|^2+|B|^2\big)^{\frac{2-p}{2}}}, \mbox{ if } 1<p\leqslant 2.
\end{cases}
\end{equation}
In the first case,  for $p\geqslant 2$  \eqref{ineqplapla} clearly implies
\[\int_\Omega |\nabla \tilde{u}-\nabla \tilde{v}|^p\leq 2^{p-2} \int_\Omega \big(|\nabla \tilde{u}|^{p-2}\nabla \tilde{u}-|\nabla \tilde{v}|^{p-2}\nabla \tilde{v}\big)\cdot (\nabla\tilde{u}-\nabla \tilde{v})  \]
and by noticing that in this case $\alpha=1$ and $\beta=0$, our result follows, 

For the second case, $1< p <2$, by \eqref{ineqplapla} we have 
\begin{equation} \nonumber\int_\Omega \dfrac{|\nabla \tilde{u}-\nabla \tilde{v}|^2}{\big( 1+|\nabla \tilde{u}|^2+|\nabla \tilde{v}|^2\big)^{\frac{2-p}{2}}}\leq \frac{1}{p-1} \int_\Omega \big(|\nabla \tilde{u}|^{p-2}\nabla \tilde{u}-|\nabla \tilde{v}|^{p-2}\nabla \tilde{v}\big)\cdot (\nabla \tilde{u}-\nabla \tilde{v})  \end{equation}

Thus,  by using that $1<p<2$, it is enough to notice that by the H\"older inequality for $\frac{2}{p}$ and $\frac{2}{2-p}$, it is straightforward that
\begin{align*}
\int_\Omega |\nabla \tilde{u}-\nabla \tilde{v}|^p &\leq \Bigg(
\int_\Omega \dfrac{|\nabla \tilde{u}-\nabla \tilde{v}|^2}{\big( 1+|\nabla \tilde{u}|^2+|\nabla \tilde{v}|^2\big)^{\frac{2-p}{2}}}\Bigg)^{\frac{p}{2}}
\Bigg(
\int_\Omega \big( 1+|\nabla \tilde{u}|^2+|\nabla \tilde{v}|^2\big)^{\frac{p}{2}}\Bigg)^{\frac{2-p}{2}}\\
&\leq 2^{\frac{(2-p)p}{4}}\Bigg(
\int_\Omega \dfrac{|\nabla \tilde{u}-\nabla \tilde{v}|^2}{\big( 1+|\nabla \tilde{u}|^2+|\nabla \tilde{v}|^2\big)^{\frac{2-p}{2}}}\Bigg)^{\frac{p}{2}}
\Bigg(
\int_\Omega \big( 1+|\nabla \tilde{u}|^p+|\nabla \tilde{v}|^p\big)\Bigg)^{\frac{2-p}{2}}.
\end{align*}
The result follows by combining the last two inequalities.

\end{proof}
Next, for the sake of completeness, we state and prove a known convergence property of the $p$-Laplace operator.
\begin{lemma}\label{mintytrick} Consider $\{\tilde{u}_n\}_{\mathbb{N}}\subset W^{1,p}_0(\Omega)$ and $ \tilde{u}\in W^{1.p}_0(\Omega)$, where $\tilde{u}_n\rightharpoonup \tilde{u},$ weakly in $W^{1,p}_0(\Omega)$. 

\begin{itemize}
    \item[(a)] If we suppose
\end{itemize}    
    \begin{equation*}  \limsup_n \int_\Omega |\nabla \tilde{u}_n|^{p-2}\nabla \tilde{u}_n \cdot (\nabla \tilde{u}_n-\tilde{u})\leqslant 0.
\end{equation*}
Then $\tilde{u}_n \rightarrow \tilde{u}$ in $W^{1,p}_0(\Omega)$ strongly.
\begin{itemize}
\item[(b)] Moreover, also consider $ \{\tilde{v}_n\}_{\mathbb{N}}\subset W^{1,p}_0(\Omega)$ and $ \tilde{v}\in W^{1.p}_0(\Omega)$, where $ \tilde{v}_n \rightharpoonup \tilde{v}$ in $W^{1,p}$. If we suppose that
\end{itemize}    
    \begin{equation} \label{1448} \limsup_n \bigg(\int_\Omega |\nabla \tilde{u}_n|^{p-2}\nabla \tilde{u}_n \cdot (\nabla \tilde{u}_n-\tilde{u})+\int_\Omega |\nabla \tilde{v}_n|^{p-2}\nabla \tilde{v}_n \cdot (\nabla \tilde{v}_n-\nabla \tilde{v})\bigg)\leqslant 0.
\end{equation}
Then $\tilde{u}_n \rightarrow \tilde{u}$ and $\tilde{v}_n \rightarrow \tilde{v}$ strongly in $W^{1,p}_0(\Omega)$.
\end{lemma}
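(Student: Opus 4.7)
The strategy is a direct application of the monotonicity estimate already packaged in Lemma \ref{monotonicityplaplacian}, combined with a standard weak-convergence ``reference vector'' trick.

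I would first reduce part (a) to showing that the nonnegative quantity
\[
I_n := \into_\Omega \bigl(|\nabla \tilde{u}_n|^{p-2}\nabla \tilde{u}_n - |\nabla \tilde{u}|^{p-2}\nabla \tilde{u}\bigr)\cdot(\nabla \tilde{u}_n - \nabla \tilde{u})
\]
tends to $0$. To that end, rewrite
\[
I_n \;=\; \into_\Omega |\nabla \tilde{u}_n|^{p-2}\nabla \tilde{u}_n\cdot(\nabla \tilde{u}_n-\nabla \tilde{u}) \;-\; \into_\Omega |\nabla \tilde{u}|^{p-2}\nabla \tilde{u}\cdot(\nabla \tilde{u}_n-\nabla \tilde{u}).
\]
The second integral tends to $0$ because $|\nabla \tilde{u}|^{p-2}\nabla \tilde{u}\in L^{p'}(\Omega)$ while $\nabla \tilde{u}_n\rightharpoonup \nabla \tilde{u}$ in $L^p(\Omega)$. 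Combined with the hypothesis $\limsup_n \int_\Omega |\nabla \tilde{u}_n|^{p-2}\nabla \tilde{u}_n\cdot(\nabla \tilde{u}_n-\nabla \tilde{u})\leqslant 0$ and the pointwise nonnegativity coming from \eqref{ineqplapla}, this forces $I_n\to 0$.

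Now I would invoke Lemma \ref{monotonicityplaplacian} applied to the pair $(\tilde{u}_n,\tilde{u})$: it yields
\[
\|\tilde{u}_n-\tilde{u}\|_{W^{1,p}_0}^p \;\leqslant\; C\, I_n^{\alpha}\bigl(1+\|\nabla \tilde{u}_n\|_{L^p}^p+\|\nabla \tilde{u}\|_{L^p}^p\bigr)^{\beta},
\]
with $\alpha,\beta$ as in that lemma. Weak convergence keeps $\|\nabla \tilde{u}_n\|_{L^p}$ bounded, so the parenthetical factor stays bounded in $n$. Since $I_n\to 0$ and $\alpha>0$, strong convergence $\tilde{u}_n\to \tilde{u}$ in $W^{1,p}_0(\Omega)$ follows.

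For part (b), I would apply the same splitting simultaneously to both sequences. Define $I_n$ as above and the analogous $J_n$ for $(\tilde{v}_n,\tilde{v})$; both are nonnegative by \eqref{ineqplapla}. Using weak convergence of $\tilde{u}_n\rightharpoonup\tilde{u}$ and $\tilde{v}_n\rightharpoonup \tilde{v}$ to discard the linear remainders (exactly as in part (a)), assumption \eqref{1448} translates to $\limsup_n (I_n+J_n)\leqslant 0$. But since $I_n,J_n\geqslant 0$, writing
\[
0 \;\leqslant\; \limsup_n I_n \;\leqslant\; \limsup_n I_n + \liminf_n J_n \;\leqslant\; \limsup_n(I_n+J_n)\;\leqslant\; 0,
\]
I conclude $I_n\to 0$, and symmetrically $J_n\to 0$. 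Plugging each into Lemma \ref{monotonicityplaplacian} as in part (a) delivers the two strong convergences and closes the proof.

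No step is really hard; the only genuine subtlety is that for $1<p<2$ the pointwise control is in the weighted form appearing in \eqref{ineqplapla}, which is precisely why Lemma \ref{monotonicityplaplacian} carries the factor $(1+\|\nabla \tilde{u}_n\|_{L^p}^p+\|\nabla \tilde{u}\|_{L^p}^p)^{\beta}$. This factor is harmless because weak convergence gives a uniform bound on $\|\nabla \tilde{u}_n\|_{L^p}$, so the obstacle disappears.
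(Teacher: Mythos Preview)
Your proof is correct and follows essentially the same route as the paper: subtract the fixed reference vector $|\nabla \tilde{u}|^{p-2}\nabla \tilde{u}$ (resp.\ $|\nabla \tilde{v}|^{p-2}\nabla \tilde{v}$), kill the resulting linear remainder via weak convergence, use nonnegativity of the monotonicity integrand, and finish with Lemma~\ref{monotonicityplaplacian}. The only cosmetic difference is that the paper deduces (a) from (b) by setting $\tilde{v}_n\equiv 0$, whereas you prove (a) first and then (b); your explicit separation $I_n\to 0$, $J_n\to 0$ in (b) is in fact slightly cleaner than the paper's treatment of the sum.
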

\begin{proof}
Notice that (a) is a direct consequence of (b). For instance, if we admit $\tilde{v}_n=0$ for all $n$, then (b) clearly implies (a). 

Thus, notice that from \eqref{1448} one clearly has that
%\[
%\nonumber \lim\limits_{k\to\infty} \int_{\Omega}|\nabla w^i|^{p-2}\nabla w^i\cdot\nabla( w^i -w^i_k), i=1,2,\]
%so that
\begin{align*}
\nonumber \limsup\limits_{n\to\infty} &\Bigg(\int_{\Omega}\Big(|\nabla \tilde{u}_n|^{p-2}\nabla \tilde{u}_n-|\nabla \tilde{u}|^{p-2}\nabla \tilde{u}\Big)\cdot \big(\nabla \tilde{u}_n -\nabla \tilde{u}\big)\\
&+\int_{\Omega}\Big(|\nabla \tilde{v}_n|^{p-2}\nabla \tilde{v}_n-|\nabla \tilde{v}|^{p-2}\nabla \tilde{v} \Big)\cdot \big(\nabla \tilde{v}_n -\nabla \tilde{v}\big)\Bigg) \\
&\leqslant \limsup_{k\to\infty}\Bigg(\int_{\Omega}|\nabla \tilde{u}|^{p-2}\nabla \tilde{u}\cdot\big(\nabla \tilde{u} -\nabla \tilde{u}_n\big)+\int_{\Omega}|\nabla \tilde{v}|^{p-2}\nabla \tilde{v} \cdot \big(\nabla \tilde{v} -\nabla \tilde{v}_n\big)\Bigg)\\
&=0.
\end{align*}

Then, recalling that $\{\tilde{u}_n\}_\mathbb{N}$ and $\{\tilde{v}_n\}_\mathbb{N}$ are both bounded in $W^{1,p}_0(\Omega)$, by Lemma \ref{monotonicityplaplacian}, we end up with
\begin{align*}
\nonumber &\limsup\limits_{n\to\infty} \bigg(\|\tilde{u}_n-\tilde{u}\|^p_{W^{1,p}_0}+\|\tilde{v}_n-\tilde{v}\|^p_{W^{1,p}_0}\bigg)\\
&\leqslant C\limsup_{k\to\infty}\Bigg(\int_{\Omega}\Big(|\nabla \tilde{u}_n|^{p-2}\nabla \tilde{u}_n-|\nabla \tilde{u}|^{p-2}\nabla \tilde{u}\Big)\cdot\nabla( \tilde{u}_n -\tilde{u})
\\
&+\int_{\Omega}\Big(|\nabla \tilde{v}_n|^{p-2}\nabla \tilde{v}_n-|\nabla \tilde{v}|^{p-2}\nabla \tilde{v} \Big)\cdot \nabla(\tilde{v}_n -\tilde{v})\Bigg) 
=0,
\end{align*}
where $C\geqslant 2+2\sup_n\{\|\nabla \tilde{u}_n\|^p_{L^p}+\|\nabla \tilde{v}_n\|^p_{L^p}\}$, and the result follows.
\end{proof}

In order to simplify some proofs, and also to let this manuscript self-contained, below we state and prove two well-known regularity results.

\begin{lemma}
\label{linfinity} Let
 $H:\Omega\times \mathbb{R}\to \mathbb{R}$ be a Carathéodory functional and $F\in L^t(\Omega)$, $t>\frac{N}{p}$. 
 \begin{itemize}
 \item[(a)] Suppose that $H(.,s)s\geqslant 0$ a.e. in $\Omega$ for all $s\in \mathbb{R}$.
 Given $w\in W^{1,p}_0(\Omega)$ a weak subsolution of
 \begin{equation}\label{1121}
 \begin{cases}
 -\Delta_p w+H(x,w)=F \mbox{ in } \Omega\\
 w=0 \mbox{ on } \partial \Omega,
 \end{cases}
 \end{equation}
 then $w\in L^\infty(\Omega)$ and $\|w\|_{L^\infty}\leqslant C \|F\|_{L^t}^{\frac{1}{p-1}}$, where $C=C(p,t,N,\Omega)>0$.
\item[(b)] Now, suppose that $|H(.,s)|\leqslant C_0 (\beta|s|^\theta+\alpha)$, where $\alpha \geqslant 0$ $\beta>0$, and $0< \theta < \frac{p^2}{N-p}$, for all $s\in \mathbb{R}$ a.e. in $\Omega$. Given $w\in W^{1,p}_0(\Omega)$ a weak subsolution of 
\begin{equation}
\label{11215}
 \begin{cases}
 -\Delta_p w=H(x,w) \mbox{ in } \Omega\\
 w=0 \mbox{ on } \partial \Omega,
 \end{cases}
 \end{equation}
\end{itemize}
\end{lemma}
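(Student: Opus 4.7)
The plan is to reduce part (b) to part (a) by a one-shot bootstrap based on Sobolev embedding. Since $w\in W^{1,p}_0(\Omega)$ sits naturally in $L^{p^*}(\Omega)$, the growth condition on $H$ will guarantee that the right-hand side of \eqref{11215} automatically belongs to a Lebesgue space whose exponent is strictly above $N/p$, and then the desired $L^\infty$ bound drops out of (a) applied with the zero-order term set to zero.

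Concretely, I would start by observing that the assumption $\theta<\frac{p^2}{N-p}$ is \emph{equivalent} to $\frac{p^*}{\theta}>\frac{N}{p}$, where $p^*=\frac{Np}{N-p}$; this is the crucial compatibility between the nonlinear growth of $H$ and the Sobolev regularity of $w$. Setting $t:=p^*/\theta>N/p$ and $F(x):=H(x,w(x))$, the growth hypothesis $|H(\cdot,s)|\leqslant C_0(\beta|s|^\theta+\alpha)$ together with H\"older's inequality yields
\[
\|F\|_{L^t}\leqslant C_0\bigl(\beta\|\,|w|^\theta\,\|_{L^t}+\alpha|\Omega|^{1/t}\bigr)= C_0\bigl(\beta\|w\|_{L^{p^*}}^{\theta}+\alpha|\Omega|^{1/t}\bigr)\leqslant C\bigl(\beta\|\nabla w\|_{L^p}^{\theta}+\alpha\bigr),
\]
where the last step uses the Sobolev inequality $\|w\|_{L^{p^*}}\leqslant S\|\nabla w\|_{L^p}$.

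Now $w$ is a weak subsolution of $-\Delta_p w\leqslant F(x)$ with $F\in L^t(\Omega)$ and $t>N/p$, which falls inside the scope of part (a) with the trivial choice of the zero-order term identically zero. Hence part (a) yields
\[
\|w\|_{L^\infty}\leqslant C\|F\|_{L^t}^{1/(p-1)}\leqslant C\bigl(\beta\|w\|_{W^{1,p}_0}^{\theta}+\alpha\bigr)^{1/(p-1)}\leqslant C\bigl(\beta^{1/(p-1)}\|w\|_{W^{1,p}_0}^{\theta/(p-1)}+\alpha^{1/(p-1)}\bigr),
\]
which is precisely the estimate that appears in the subsequent application within the proof of Proposition \ref{prop1} (take $\alpha=1$, $\beta=d_2\tau\|u\|_{L^\infty}^{r}$).

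The main obstacle is conceptual rather than technical: one must make sure that the exponent counting really works in one step---that is, that $\theta$ up to $\frac{p^2}{N-p}$ is sharp for the Sobolev-to-$L^\infty$ bootstrap, and does not require a Moser-type iteration. This is verified by the equivalence $\theta<\frac{p^2}{N-p}\Leftrightarrow p^*/\theta>N/p$ recorded above. Beyond that, one needs to check that part (a) actually applies to the present subsolution framework: testing with $G_k(w)=(w-k)_+\in W^{1,p}_0(\Omega)$ makes the (absent) zero-order term contribute only non-negatively, so dropping it and using only the upper bound $-\Delta_p w\leqslant F$ still drives the Stampacchia iteration in (a) to completion. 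Tracking the dependence $C=C(C_0,p,\theta,N,\Omega)$ is then routine.
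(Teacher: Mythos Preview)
Your argument is correct, but it follows a different line than the paper's. The paper proves part (b) by running a Stampacchia level-set iteration from scratch: it tests \eqref{11215} with $G_k(w)$, applies H\"older with exponents $\frac{p^*}{\theta}$ and $\frac{p^*}{p^*-\theta}$ on the term $|w|^\theta |G_k(w)|$, then H\"older again to extract a power of $\mathrm{meas}(\Omega_k)$, and finally invokes Stampacchia's decay lemma for the function $h\mapsto\mathrm{meas}(\{|w|>h\})$, using that $\theta<\frac{p^2}{N-p}$ forces the iteration exponent $\frac{p^*-\theta-1}{p-1}$ to exceed $1$. Your route, by contrast, is a one-shot reduction: the same hypothesis $\theta<\frac{p^2}{N-p}$ is read as $p^*/\theta>N/p$, so $F:=H(\cdot,w)\in L^{p^*/\theta}(\Omega)$ already lies in the range where part (a) applies with the zero-order term set to zero, and the bound follows immediately.

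Both approaches exploit the same numerical threshold, but in different guises. Your reduction is shorter and more modular---it makes transparent that part (b) is really part (a) in disguise once one notices $H(\cdot,w)$ has enough integrability---whereas the paper's direct iteration is self-contained and displays the precise Stampacchia mechanism (in particular the role of the exponent $\frac{p^*-\theta-1}{p-1}>1$). A minor point: the paper states the final bound with $\|w\|_{L^{p^*}}$ rather than $\|w\|_{W^{1,p}_0}$, so your last application of the Sobolev inequality is an extra (harmless) step; and the splitting $(a+b)^{1/(p-1)}\leqslant C\bigl(a^{1/(p-1)}+b^{1/(p-1)}\bigr)$ you use tacitly requires the elementary observation that the constant depends on whether $p\gtrless 2$.
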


then $v\in L^\infty(\Omega)$ and $\|w\|_{L^\infty}\leqslant C \big(\beta^{\frac{1}{p-1}}\|w\|^{\frac{\theta}{(p-1)}}_{L^{p^*}}+\alpha^{\frac{1}{p-1}}\big)$, where $C=C(p,C_0,N,\theta,\Omega)>0$.
\begin{proof}
Since $w \in W^{1,p}_0(\Omega)$ is a weak subsolution of \eqref{1121}, in order to prove (a) it is enough to take $G_k(w)$ as a test function in the corresponding inequality \eqref{1121} and then, remarking that
\[\int_{\Omega} H(x,w)G_k(w) \geqslant 0,\] and that $t>\frac{N}{p}$, by straightforward applications of H\"{o}lder's inequality in $F$ and $G_k(w)$, the result follows directly by Lemma 6.2 p. 49 in \cite{bocc}.

We turn our attention to the proof of (b). For this, let us consider $G_k(w)$ as a test function in the corresponding inequality of \eqref{11215} so that by the Sobolev and H\"{o}lder inequalities,  $p^*/\theta$

\begin{align*}
    \bigg(\int_\Omega |G_k(w)|^{p^*}\bigg)^{\frac{p}{p^*}}&\leqslant C \int_\Omega |\nabla G_k(w)|^p \leqslant C\int_\Omega H(x,w)
    G_k(w)
    \\
    & \leqslant C\int_{\Omega}(\beta|w|^\theta +\alpha)|G_k(w)|
    \\
    & \leqslant C\Bigg(\beta\bigg(\int_{\Omega}|w|^{p^*}\bigg)^{\frac{\theta}{p^*}}\bigg( \int_\Omega |G_k(w)|^{\frac{p^*}{p^*-\theta}}\bigg)^{\frac{p^*-\theta}{p^*}}+\alpha \int_\Omega |G_k(w)|\Bigg)
    \\
    &\leqslant C\Bigg(\bigg(\beta\int_{\Omega}|w|^{p^*}\bigg)^{\frac{\theta}{p^*}}+\alpha\Bigg)\bigg( \int_\Omega |G_k(w)|^{\frac{p^*}{p^*-\theta}}\bigg)^{\frac{p^*-\theta}{p^*}},
\end{align*}
where we have used that $p^*>\theta$ and $p^*>\frac{p^*}{p^*-\theta}$, since $p<N$, $\theta<\frac{p^2}{N-p}$ and clearly $p^*-\theta>1$. Thus,   once again by the H\"{o}lder inequality but now for $p^*-\theta$ and $\frac{p^*-\theta}{p^*-\theta-1}$ we get
\[\int_\Omega |G_k(w)|^{\frac{p^*}{p^*-\theta}}\leqslant\bigg( \int_\Omega |G_k(w)|^{p^*}\bigg)^{\frac{1}{p^*-\theta}}\mbox{meas}(\Omega_k)^{\frac{p^*-\theta-1}{p^*-\theta}},\]
where $\Omega_k=\{x\in \Omega: |w(x)|>k\}.$ Thence, by plugging the latter inequalities, we end up with
\[  \bigg(\int_\Omega |G_k(w)|^{p^*}\bigg)^{\frac{p-1}{p^*}}\leqslant C\bigg(\beta\|w\|^{\theta}_{L^{p^*}} +\alpha\bigg) \mbox{meas}(\Omega_k)^{\frac{p^*-\theta-1}{p^*}} \]

At this point, remark that given $0<k<\tau$ 
\begin{align*}(h-k)^{p-1}\mbox{meas}(\Omega_h)^{\frac{p-1}{p^*}}&\leqslant \bigg(\int_{\Omega_h}(h-k)^{p^*}\bigg)^{\frac{p-1}{p^*}}\\&\leqslant \bigg(\int_{\Omega_h} |G_k(w)|^{p^*}\bigg)^{\frac{p-1}{p^*}}
\leqslant \bigg(\int_{\Omega} |G_k(w)|^{p^*}\bigg)^{\frac{p-1}{p^*}},
\end{align*}
since $\Omega_\tau=\{x\in\Omega: w(x)>\tau\}\subset \Omega_k.$

In this fashion, by combining the last two inequalities it is straightforward to check that
\[\mbox{meas}(\Omega_h)\leqslant C\dfrac{\beta^{\frac{p^*}{p-1}}\|w\|_{L^{p^*}}^{\frac{p^*\theta}{(p-1)}}+\alpha^{\frac{p^*}{(p-1)}}}{(h-k)^{p^*}}\mbox{meas}(\Omega_k)^{\frac{p^*-\theta-1}{p-1}},  \forall \ 0<k<\tau.\]
Now, remark that since $0<\theta<\frac{p^2}{N-p}$ we have $\frac{p^*-\theta-1}{p-1}>1$ and the result follows by applying Lemme 4.1 (ii) p.93-94 in \cite{Stam3} for $\varphi(h)=\mbox{meas}(\Omega_h)$, $\alpha=p^*$ and $\beta=\frac{p^*-\theta-1}{p-1}$.
\end{proof}

	\subsection*{Nonlinear Convergence for Two Variables}
	
The next lemma is used to pass to the limit in delicate terms with low regularity. Despite that its based in the classic Vitali Theorem, we decided to formulate it in a more general form that we use in the present manuscript and to allocate it in the appendix, since it may be useful in other contexts. %For the convenience of the reader, we give the details of its proof.
	
\begin{lemma}\label{nonlinearconvergences} Consider $\Omega \subset \mathbb{R}^N$, bounded, $H:\Omega \times \mathbb{R}\times \mathbb{R}\to \mathbb{R}$, Carath\'eodory, and $\{\tilde{u}_k\}_\mathbb{N}, \{\tilde{v}_k\}_\mathbb{N}$. Suppose that, for $i=1,2$ given $0\leqslant s_i<+\infty$, $\max\{s_i,1\}<q_i<+\infty$, $\frac{q_i}{q_i-s_i}<t_i\leqslant+\infty$, there exist $w_i\in L^{t_i}(\Omega), w_i\geqslant 0$ a.e. in $\Omega$, $\tilde{u}$ and $\tilde{u}\in L^{q_1}(\Omega)$ and $\tilde{v}\in L^{q_2}(\Omega)$, for which
\begin{itemize}
\item[(i)] $\tilde{u}_k\to \tilde{u}$ and $\tilde{v}_k\to \tilde{v}$, a.e. in $\Omega$, if $k\to +\infty$,
\item[(ii)] $\{\tilde{u}_k\}_\mathbb{N}$ is bounded in $L^{q_1}(\Omega)$ and $\{\tilde{v}_k\}_\mathbb{N}$ is bounded in $L^{q_2}(\Omega)$, 
\item[(iii)] $\int_{\{|\tilde{u}_k|>n\}} |H(x,\tilde{u}_k,\tilde{v}_k)| \leqslant \int_{\{|\tilde{u}_k|>n\}} w_1 |\tilde{u}_k|^{s_1}$ for all $k,n\in \mathbb{N}$,
\item[(iv)] Given $n_0\in \mathbb{N}$ there exists $C=C(n_0)>0$, such that $$\int_{\{|\tilde{u}_k|\leqslant n_0\}} |H(x,\tilde{u}_k,\tilde{v}_k)| \leqslant C(n_0)\int_{\{|\tilde{u}_k|\leqslant n_0\}} w_2 |\tilde{v}_k|^{s_2}$$ for all $k\in \mathbb{N}$.
\end{itemize}
Then $H(., \tilde{u}_k,\tilde{v}_k)\to H(., \tilde{u},\tilde{v})$ in $L^1(\Omega)$ if $k\to+\infty.$
    \end{lemma}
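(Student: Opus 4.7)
The strategy is to invoke the Vitali convergence theorem. By hypothesis (i) and the Carath\'eodory character of $H$, the pointwise convergence $H(x,\tilde{u}_k,\tilde{v}_k)\to H(x,\tilde{u},\tilde{v})$ holds a.e.\ in $\Omega$, so it suffices to prove equi-integrability of the family $\{H(\cdot,\tilde{u}_k,\tilde{v}_k)\}_{k\in\mathbb{N}}$, namely that for every $\varepsilon>0$ there exists $\delta>0$ with $\int_E|H(\cdot,\tilde{u}_k,\tilde{v}_k)|<\varepsilon$ uniformly in $k$ whenever $|E|<\delta$.

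Given a measurable $E\subset\Omega$, I would split $E=E_n^1\cup E_n^2$ with $E_n^1=E\cap\{|\tilde{u}_k|>n\}$ and $E_n^2=E\cap\{|\tilde{u}_k|\leq n\}$ for $n$ to be chosen. On $E_n^1$, hypothesis (iii) gives
\[
\int_{E_n^1}|H(x,\tilde{u}_k,\tilde{v}_k)|\leq\int_{\{|\tilde{u}_k|>n\}}w_1|\tilde{u}_k|^{s_1},
\]
and a triple H\"older inequality with exponents $t_1$, $q_1/s_1$ (read as $+\infty$ if $s_1=0$) and the conjugate bounds the right-hand side by $\|w_1\|_{L^{t_1}}\|\tilde{u}_k\|_{L^{q_1}}^{s_1}|\{|\tilde{u}_k|>n\}|^{1-1/t_1-s_1/q_1}$. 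The assumption $t_1>q_1/(q_1-s_1)$ ensures the measure exponent is strictly positive, while (ii) together with Chebyshev gives $|\{|\tilde{u}_k|>n\}|\leq M^{q_1}n^{-q_1}$ uniformly in $k$. Thus $n$ can be fixed once and for all so large that this contribution is below $\varepsilon/2$ independently of $k$ and $E$. With this $n$ now fixed, hypothesis (iv) on $E_n^2$ yields
\[
\int_{E_n^2}|H(x,\tilde{u}_k,\tilde{v}_k)|\leq C(n)\int_{E}w_2|\tilde{v}_k|^{s_2}\leq C(n)\|w_2\|_{L^{t_2}(E)}\|\tilde{v}_k\|_{L^{q_2}}^{s_2}|E|^{1-1/t_2-s_2/q_2},
\]
where $t_2>q_2/(q_2-s_2)$ makes the last exponent positive, and the absolute continuity of the $L^{t_2}$-integral of $w_2$ forces $\|w_2\|_{L^{t_2}(E)}\to 0$ as $|E|\to 0$. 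Combining the two bounds furnishes $\delta$ and equi-integrability follows; Vitali then delivers the claimed $L^1$ convergence.

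The only delicate part is the simultaneous calibration of the H\"older exponents so that both the measure of the level set $\{|\tilde{u}_k|>n\}$ (controlled via (ii)) and the measure of $E$ (small by choice) appear raised to strictly positive powers; once the hypotheses on $t_i$ are used precisely for this purpose the rest is a routine application of dominated/Vitali-type convergence. Note also that $H(\cdot,\tilde u,\tilde v)\in L^1(\Omega)$ comes for free from Fatou applied to the above uniform bounds.
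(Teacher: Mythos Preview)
Your proof is correct and follows essentially the same route as the paper: Vitali's theorem, pointwise convergence from the Carath\'eodory assumption, and equi-integrability obtained by splitting $E$ along the level set $\{|\tilde u_k|>n\}$, controlling the tail via (iii), Chebyshev and H\"older to fix $n$, and then using (iv) with H\"older on the remaining piece. The only cosmetic difference is that the paper treats the cases $t_i<\infty$ and $t_i=\infty$ separately, whereas you absorb the smallness either through the $|E|$-power or the absolute continuity of $\|w_2\|_{L^{t_2}(E)}$; both work.
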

\begin{proof}
Since $H$ is Carath\'eodory, from (i) we already have that
$H(x,\tilde{u}_k,\tilde{v}_k)\to H(x,\tilde{u},\tilde{v})$ a.e. in $\Omega$. Thus, it is enough to prove that $\{H(x,\tilde{u}_k,\tilde{v}_k)\}_\mathbb{N}$ is uniformly integrable.

Indeed, consider $E\subset \Omega$, measurable,  and $n\in \mathbb{N}$. By (iii), it is clear that
\begin{equation}
\label{1234}
\int_E|H(x,\tilde{u}_k,\tilde{v}_k)| \leqslant \int_{E \cap |\tilde{u}_k|\leqslant n\}}|H(x,\tilde{u}_k,\tilde{v}_k)| + \int_{\{|\tilde{u}_k|> n\}} w_1 |\tilde{u}_k|^{s_1}.
\end{equation}
Next, recall that by the Chebyshev inequality, 
\[\mbox{meas}(\{|\tilde{u}_k|> n\})\leqslant \dfrac{\|u_k\|^{q_1}}{n^{q_1}}, \ \forall k,n\in \mathbb{N}.\]
In this manner, if $t_1=+\infty$. by the H\"older inequality for $\frac{q_1}{s_1}$ and $\frac{q_1}{q_1-s_1}$ there follows
\begin{align*}
\int_{\{|\tilde{u}_k|> n\}}w_1|\tilde{u}_k|^{s_1} &\leqslant \|w_1\|_{L^{\infty}} \|\tilde{u}_k\|_{L^{q_1}}^{s_1}\big(\mbox{meas}(\{|\tilde{u}_k|> n\})\big)^{\frac{q_1-s_1}{q_1}}\\
&\leqslant \dfrac{1}{n^{q_1-s_1}} \|\tilde{u}_k\|_{L^{q_1}}^{s_1}\|w_1\|_{L^{\infty}},.
\end{align*}

Now, when $t_i$ is finite,  by the the H\"older inequality for $t_1$, $\frac{t_1 q_1}{t_1(q_1-s_1)-q_1}$, $\frac{q_1}{s_1}$, if $s_1>0$, or $t_1$, $\frac{t_1}{t_1-1}$, if $s_1=0$, we arrive at
\begin{align*}
\int_{\{|\tilde{u}_k|> n\}}w_1|\tilde{u}_k|^{s_1} &\leqslant \|w_1\|_{L^{t_1}} \|\tilde{u}_k\|_{L^{q_1}}^{s_1}\big(\mbox{meas}(\{|\tilde{u}_k|> n\})\big)^{\frac{t_1(q_1-s_1)-q_1}{t_1 q_1}}\\
&\leqslant \dfrac{C}{n^{\frac{q_1(t_1-1)}{t_1}-s_1}} \|w_1\|_{L^{t_1}}, 
\end{align*}
where $C=\sup_{k} \{ \|\tilde{u}_k\|_{L^{q_1}}^{\frac{q_1}{t_1^\prime}}\}$ is finite by (ii).

Hence, by the last two inequalities since $q_1>s_1$ and $\frac{q_1(t_1-1)}{t_1}>s_1$, given $\varepsilon>0$, consider $n_0\in\mathbb{N}$ satisfying
\begin{equation}
\label{1301}
\int_{\{|\tilde{u}_k|> n\}}w_1|\tilde{u}_k|^{s_1}<\frac{\varepsilon}{2}.
\end{equation}
Further, considering this $n_0$ fixed, by (iv) we have that
\[\int_{E\cap \{|\tilde{u}_k|\leqslant n_0\}} |H(x,\tilde{u}_k,\tilde{v}_k)| \leqslant C(n_0)\int_{\{|\tilde{u}_k|\leqslant n_0\}} w_2 |\tilde{v}_k|^{s_2}.\]

In particular, if $t_2$ is finite, by the H\"older inequality, for $t_2$, $\frac{t_2}{t_2-1}$, if $s_2=0$, or  $t_2$, $\frac{t_2 q_2}{t_2(q_2-s_2)-q_2}$, $\frac{q_2}{s_2}$, if $s_2>0$, we get

\begin{align*}
\int_{E\cap \{|\tilde{u}_k|\leqslant n_0\}} | H(x,\tilde{u}_k,\tilde{v}_k)| &\leqslant C(n_0)\|w_2\|_{L^{t_2}} \|\tilde{v}_k\|_{L^{q_2}}^{s_2} \big(\mbox{meas}(E)\big)^{\frac{t_2(q_2-s_2)-q_2}{t_2 q_2}}\\
&\leqslant C_2 \|w_2\|_{L^{t_2}}\big(\mbox{meas}(E)\big)^{\frac{t_2(q_2-s_2)-q_2}{t_2 q_2}},
\end{align*}
for all $k \in \mathbb{N}$, where $C_2=C(n_0)\sup_k \|\tilde{v}_k\|_{L^{q_2}}^{s_2}<+\infty$ by (ii). Moreover, if $t_2=+\infty$, by the H\"older inequality for $\frac{q_2}{s_2}$ and $\frac{q_2}{q_2-s_2}$ there follows 
\begin{align*}
\int_{E\cap \{|\tilde{u}_k|\leqslant n_0\}} | H(x,\tilde{u}_k,\tilde{v}_k)| &\leqslant C(n_0)\|w_2\|_{L^{\infty}} \|\tilde{v}_k\|_{L^{q_2}}^{s_2} \big(\mbox{meas}(E)\big)^{\frac{q_2-s_2}{q_2}}\\
&\leqslant C_2 \|w_2\|_{L^{t_2}}\big(\mbox{meas}(E)\big)^{\frac{q_2-s_2}{q_2}}.
\end{align*}
In this way, by using that $q_2>s_2$ and $t_2(q_2-s_2)>q_2$, consider $\delta>0$ such that if $\mbox{meas}(E)<\delta$, then
\begin{equation}
\label{1423}
\int_{E\cap \{|\tilde{u}_k|\leqslant n_0\}} |H(x,\tilde{u}_k,\tilde{v}_k)| < \frac{\varepsilon}{2}.
\end{equation}
Thence, by combining \eqref{1234},\eqref{1301} and \eqref{1423}, for every $E\subset \Omega$, measurable, for which $\mbox{meas}(E)<\delta$, there follows that
\[\int_E |H(x,\tilde{u}_k,\tilde{v}_k)| < \varepsilon, \forall k\in \mathbb{N},\]
so that $\{H(.,\tilde{u}_k,\tilde{v}_k)\}_\mathbb{N}$ is uniformly integrable and the result follows by the Vitali Theorem. 
\end{proof}

\section*{Acknowlegments}

The first author was partially supported by FEMAT/Brazil grant 01/2023 and Programa de Educa\c c\~ao Tutorial/FNDE/CAPES/Brazil grant 0331131595. The second author was partially supported  by CAPES/Brazil grant 88887.480897/2020-00,  CNPq/Brazil grant 142279/2020-0 and FAPEMIG/Brazil grant APQ-04528-22.%CAPES - Print/Brazil 88887.469671/2019-00, FAPDF/Brazil grant 00193.00002176/2018-87, and FEMAT/Brazil grant 01/2023. %The authors would like to express their gratitude to the anonymous referee for her/his careful reading and her/his constructive corrections and suggestions to the text. Thanks to her/his work, we were able to improve the present paper.

\bibliographystyle{plain}

\end{document}